%%
% Sample article using the amsart style
% Based on an example provided by AMS
% modified somewhat for use at NIU Math Dept.
%
%\RequirePackage{fix-cm}

%\documentstyle{amsart}

\documentclass[10pt, reqno]{amsart}

%\usepackage[pdftex]{graphicx}
%\usepackage{amssymb,amsbsy,amsmath}
%\usepackage{color}

%%
% BibTeX logo; normally not needed
\def\BibTeX{{\rm B\kern-.05em{\sc i\kern-.025em b}\kern-.08em
    T\kern-.1667em\lower.7ex\hbox{E}\kern-.125emX}}

\hfuzz1pc % Don't bother to report overfull boxes if overage is < 1pc

%       Theorem environments
% theorem numbering gets reset to 1 in each section
\newtheorem{thm}{Theorem}[section]
% most others are numbered together with theorems

\newtheorem{lem}[thm]{Lemma}
\newtheorem{prop}[thm]{Proposition}
\newtheorem{cond}[thm]{Condition}
%%
% just an example of what will happen if you skip the [thm]
% part -- conjectures will be numbered consecutively

\theoremstyle{definition}

\theoremstyle{remark}
\newtheorem{rem}{Remark}[section]

  % to make the notation environment unnumbered

%%
% equation counter will be reset at the start of each section
\numberwithin{equation}{section}

%       Math definitions
% Blackboard Bold letters
%\newcommand{\C}{\mathbbb{C}}
%\newcommand{\N}{\mathbbb{N}}
%\newcommand{\Q}{\mathbbb{Q}}
%\newcommand{\R}{\mathbbb{R}}
%\newcommand{\Z}{\mathbbb{Z}}
% operators
%\newcommand{\cov}{\operatorname{cov}}
%\newcommand{\cf}{\operatorname{cf}}
%\newcommand{\add}{\operatorname{add}}
%\newcommand{\non}{\operatorname{non}}
%\newcommand{\per}{\operatorname{per}}
%\newcommand{\End}{\operatorname{End}}

    \newcommand{\floor}[1]{\lfloor#1\rfloor}

    \newcommand{\EE}{\mathbb{E}}

    \newcommand{\Exp}{\operatorname{E}}
    \newcommand{\E}{\Exp}

    \renewcommand{\Pr}{\operatorname{P}}

    \newcommand{\eqd}{\stackrel{d}{=}}
    \newcommand{\dto}{\xrightarrow{d}}
    \newcommand{\wto}{\xrightarrow{w}}
    \newcommand{\vto}{\xrightarrow{v}}
    \newcommand{\fidi}{\xrightarrow{\text{fidi}}}

    \newcommand{\eind}{\stackrel{d}{=}}
    \newcommand{\rmd}{\mathrm{d}}

\newcommand{\be}{\begin{equation}}
    \newcommand{\ee}{\end{equation}}

\begin{document}

\title[Joint functional convergence for moving averages] % running head version
{Joint functional convergence of partial sums and maxima for moving averages with weakly dependent heavy-tailed innovations and random coefficients}

%%
% See the complete version of this file, testart.tex, for more
% complicated examples
%
\author{Danijel Krizmani\'{c}}

\address{Danijel Krizmani\'{c}\\ Faculty of Mathematics\\
        University of Rijeka\\
        Radmile Matej\v{c}i\'{c} 2, 51000 Rijeka\\
        Croatia}
%% Note the doubled @@:
\email{dkrizmanic@math.uniri.hr}

%\thanks{Research supported in part by NSF grant
%CCR-87-10433 and DARPA Contract N00019-89-J-1988.}

%\date{July 25, 1994}

\subjclass[2010]{Primary 60F17; Secondary 60G52, 60G70}
\keywords{Functional limit theorem, Regular variation, $M_{2}$ topology, Moving average process, Extremal process, Stable L\'{e}vy process}

%\maketitle

\begin{abstract}
For moving average processes with random coefficients and heavy-tailed innovations that are weakly dependent in the sense of strong mixing and local dependence condition $D'$ we study joint functional convergence of partial sums and maxima. Under the assumption that all partial sums of the series of coefficients are a.s.~bounded between zero and the sum of the series we derive a functional limit theorem in the space of $\mathbb{R}^{2}$--valued c\`{a}dl\`{a}g functions
on $[0, 1]$ with the Skorokhod weak $M_{2}$ topology. 
\end{abstract}

\maketitle

\section{Introduction}
\label{intro}

It is known that the joint partial sums and maxima processes constructed from i.i.d.~regularly varying random variables
with the tail index $\alpha \in (0,2)$
converge weakly in the space $D([0, 1], \mathbb{R}^{2})$ of $\mathbb{R}^{2}$--valued c\`adl\`ag functions on $[0, 1]$ with the Skorokhod $J_{1}$ topology, with the limit consisting of a stable L\'{e}vy process and an extremal process, see Chow and Teugels~\cite{ChTe79} and Resnick~\cite{Re86}.

The joint functional convergence holds also in the weakly dependent case. Anderson and Turkman~\cite{AnTu91},~\cite{AnTu95} studied weak convergence of the joint partial sums and maxima processes in the case when the underlying random variables are strongly mixing, and in the heavy-tailed case, under Leadbetter's $D$ and $D'$ dependence conditions familiar from extreme value theory. Conditions $D$ and $D'$ are quite restrictive, since they exclude m-dependent sequences.

Recently, Krizmani\'{c}~\cite{Kr20} showed that for a regularly varying sequence of dependent random variables with index $\alpha \in (0,2)$, for which clusters of high-threshold excesses can be broken down into asymptotically independent blocks, the joint stochastic processes of partial sums and maxima converge in the space $D([0,1], \mathbb{R}^{2})$ endowed with the Skorokhod weak $M_{1}$ topology under the condition that all extremes within each cluster of big values have the same sign. This topology is weaker than the more commonly used Skorokhod $J_{1}$ topology, the latter being appropriate when there is no clustering of extremes. This result extends the functional limit theorem obtained by Krizmani\'{c}~\cite{Kr18-0} in the special case of linear processes with i.i.d.~regularly varying innovations and all (deterministic) coefficients of the same sign. Even when these coefficients are not of the same sign, but all partial sums of the series of coefficients are bounded between zero and the sum of the series, joint functional convergence still holds, but with respect to the weaker Skorokhod $M_{2}$ topology, see Krizmani\'{c}~\cite{Kr18}.

In this paper we study joint functional convergence of partial sums and maxima for linear or moving averages processes with weakly dependent innovations and random coefficients. In proving the joint functional convergence for these processes we will rely on already established marginal functional convergence for partial sums when the innovations are weakly dependent in the sense of strong mixing and local dependence condition $D'$ (see Krizmani\'{c}~\cite{Kr22-1}) and for partial maxima of linear processes with i.i.d.~innovations (see Krizmani\'{c}~\cite{Kr22-2}).

We proceed by stating the problem precisely.
Let $(Z_{i})_{i \in \mathbb{Z}}$ be a strictly stationary sequence of regularly varying random variables with index of regular variation $\alpha \in (0,2)$.
This means that
\begin{equation}\label{e:regvar}
 \Pr(|Z_{i}| > x) = x^{-\alpha} L(x), \qquad x>0,
\end{equation}
where $L$ is a slowly varying function at $\infty$. Let $(a_{n})$ be a sequence of positive real numbers such that
\be\label{eq:niz}
n \Pr (|Z_{1}|>a_{n}) \to 1,
\ee
as $n \to \infty$. Then regular
variation of $Z_{i}$ can be expressed in terms of
vague convergence of measures on $\EE = \overline{\mathbb{R}} \setminus \{0\}$:
\begin{equation}
  \label{eq:onedimregvar}
  n \Pr( a_n^{-1} Z_i \in \cdot \, ) \vto \mu( \, \cdot \,) \qquad \textrm{as} \ n \to \infty,
\end{equation}
where $\mu$ is a measure on $\EE$  given by
\begin{equation}
\label{eq:mu}
  \mu(\rmd x) = \bigl( p \, 1_{(0, \infty)}(x) + r \, 1_{(-\infty, 0)}(x) \bigr) \, \alpha |x|^{-\alpha-1} \, \rmd x,
\end{equation}
with
\be\label{eq:pq}
p =   \lim_{x \to \infty} \frac{\Pr(Z_i > x)}{\Pr(|Z_i| > x)} \qquad \textrm{and} \qquad
  r =   \lim_{x \to \infty} \frac{\Pr(Z_i < -x)}{\Pr(|Z_i| > x)}.
\ee
We study
the moving average process with random coefficients, defined by
\begin{equation}\label{e:MArandom}
X_{i} = \sum_{j=0}^{\infty}C_{j}Z_{i-j}, \qquad i \in \mathbb{Z},
\end{equation}
where
$(C_{i})_{i \geq 0 }$ is a sequence of random variables independent of $(Z_{i})$ such that the above series is a.s.~convergent. One well-known sufficient condition for that is
\begin{equation}\label{e:momcond}
\sum_{j=0}^{\infty} \mathrm{E} |C_{j}|^{\delta} < \infty \qquad \textrm{for some}  \ \delta < \alpha,\,0 < \delta \leq 1.
\end{equation}
The moment condition (\ref{e:momcond}), stationarity of the sequence $(Z_{i})$ and $\mathrm{E}|Z_{1}|^{\beta} < \infty$ for every $\beta \in (0,\alpha)$ (which follows from the regular variation property and Karamata's theorem) imply the a.s.~convergence of the series in (\ref{e:MArandom}), since
$$ \mathrm{E}|X_{i}|^{\delta} \leq \sum_{j=0}^{\infty} \mathrm{E}|C_{j}|^{\delta} \mathrm{E}|Z_{i-j}|^{\delta} = \mathrm{E}|Z_{1}|^{\delta} \sum_{j=0}^{\infty}\mathrm{E}|C_{j}|^{\delta} < \infty.$$
Condition (\ref{e:momcond}) implies also the a.s.~convergence of the series $\sum_{j=0}^{\infty}C_{j}$.
Another condition that assures the a.s.~convergence of the series in the definition of moving average processes with
$$\begin{array}{rl}
 \nonumber \mathrm{E}(Z_{1})=0, & \quad \textrm{if} \ \alpha \in (1,2),\\[0.2em]
 \nonumber Z_{1} \ \textrm{is symmetric}, & \quad \textrm{if} \ \alpha =1,
\end{array}$$
 and a.s.~bounded coefficients
 can be deduced from the results in Astrauskas~\cite{At83}:
$$ \sum_{j =0}^{\infty}c_{j}^{\alpha} L(c_{j}^{-1}) < \infty,$$
where $(c_{j})$ is a sequence of positive real numbers such that $|C_{j}| \leq c_{j}$ a.s.~for all $j$ (c.f.~Balan et al.~\cite{BJL16}).

If the innovations $Z_{i}$'s are independent, Krizmani\'{c}~\cite{Kr19} derived a (marginal) functional limit theorem for the partial sum stochastic process
\be\label{eq:defVn1}
V_{n}(t) = \frac{1}{a_{n}}  \sum_{i=1}^{\floor {nt}}X_{i}, \qquad t \in [0,1],
\ee
in the space $D([0,1], \mathbb{R})$ of real--valued right continuous functions on $[0,1]$ with left limits, endowed with the Skorohod $M_{2}$ topology, under some usual regularity conditions and the assumption that all partial sums of the series $C= \sum_{i=0}^{\infty}C_{i}$ are a.s.~bounded between zero and the sum of the series, i.e.
\be\label{eq:InfiniteMAcond}
0 \le \sum_{i=0}^{s}C_{i} \Bigg/ \sum_{i=0}^{\infty}C_{i} \le 1 \ \ \textrm{a.s.} \qquad \textrm{for every} \ s=0, 1, 2 \ldots.
\ee
More precisely,
\be\label{eq:fconvVn}
 V_{n}(\,\cdot\,) \dto \widetilde{C} V(\,\cdot\,) \qquad \textrm{as} \ n \to \infty,
\ee
in $D([0,1], \mathbb{R})$ endowed with the $M_{2}$ topology,
where $V$ is an $\alpha$--stable L\'{e}vy process with characteristic triple $(0, \mu, b)$, with $\mu$ as in $(\ref{eq:mu})$,
$$ b = \left\{ \begin{array}{cc}
                                   0, & \quad \alpha = 1,\\[0.4em]
                                   (p-r)\frac{\alpha}{1-\alpha}, & \quad \alpha \in (0,1) \cup (1,2),
                                 \end{array}\right.$$
    and $\widetilde{C}$ is a random variable, independent of $V$, such that $\widetilde{C} \eind C$.
 When the sequence of coefficients $(C_{j})$ is deterministic, relation (\ref{eq:fconvVn}) reduces to
 $$ V_{n}(\,\cdot\,) \dto C V(\,\cdot\,) \qquad \textrm{as} \ n \to \infty$$
 (see Proposition 3.2 in Krizmani\'{c}~\cite{Kr19}). Simplifying notation, we sometimes omit brackets and write $V_{n} \dto CV$. This functional convergence, as shown by Avram and Taqqu~\cite{AvTa92}, can not be strengthened to the Skorokhod $J_{1}$ convergence on $D([0,1], \mathbb{R})$, but if all coefficients are nonnegative, then it holds in the $M_{1}$ topology.

Further, (marginal) functional convergence of partial maxima processes in the i.i.d.~case was obtained in Krizmani\'{c}~\cite{Kr22-2}, i.e.~under some standard moment conditions on the sequence of coefficients $(C_{i})$ it holds that
\begin{equation}\label{eq:fconvMn}
 M_{n}(\,\cdot\,) \dto  M(\,\cdot\,) \qquad \textrm{as} \ n \to \infty,
\end{equation}
in $D([0,1], \mathbb{R})$ endowed with the Skorokhod $M_{1}$ topology, where
\be\label{eq:defWn}
M_{n}(t) = \left\{ \begin{array}{cc}
                                   \displaystyle \frac{1}{a_{n}} \bigvee_{i=1}^{\floor {nt}}X_{i}, & \quad  \displaystyle t \in \Big[ \frac{1}{n}, 1 \Big],\\[1.4em]
                                   \displaystyle \frac{X_{1}}{a_{n}}, & \quad \displaystyle  t \in \Big[0,  \frac{1}{n} \Big\rangle,
                                 \end{array}\right.
%\frac{1}{a_{n}} \bigvee_{i=1}^{\floor {nt}}X_{i}, \qquad t \in [0,1],
\ee
is the corresponding partial maximum process and
$M = C^{(1)}W^{(1)} \vee C^{(2)}W^{(2)}$,
 where $W^{(1)}$ is an extremal process with exponent measure $\mu_{+}(\rmd x) = p \alpha x^{-\alpha-1} \rmd x$ for $x>0$, $W^{(2)}$ is an extremal process with exponent measure $\mu_{-}(\rmd x) = r \alpha x^{-\alpha-1} \rmd x$ for $x>0$, and $(C^{(1)}, C^{(2)})$ is a two dimensional random vector, independent of $(W^{(1)}, W^{(2)})$, such that
$(C^{(1)}, C^{(2)}) \eind (C_{+}, C_{-})$, with
\begin{equation}\label{e:Cplusminus}
C_{+}= \max \{ C_{j} \vee 0 : j \geq 0 \} \qquad \textrm{and} \qquad C_{-}= \max \{ -C_{j} \vee 0 : j \geq 0 \}.
\end{equation}

Recently, in Krizmani\'{c}~\cite{Kr22-1} functional convergence in (\ref{eq:fconvVn}) was extended to the case when the innovations $Z_{i}$ are weakly dependent, in the sense that $(Z_{i})$ is a strongly mixing sequence which satisfies the local dependence condition $D'$ as is given in Davis~\cite{Da83}:
 \begin{equation}\label{e:D'cond}
 \lim_{k \to \infty} \limsup_{n \to \infty}~n \sum_{i=1}^{\lfloor n/k \rfloor} \Pr \bigg( \frac{|Z_{0}|}{a_{n}} > x, \frac{|Z_{i}|}{a_{n}} >x \bigg) = 0 \qquad \textrm{for all} \ x >0.
 \end{equation}
 For instance, a process which is an instantaneous function of a stationary Gaussian process with covariance function $r_{n}$ behaving like $r_{n} \log n \to 0$ as $n \to \infty$ satisfies condition (\ref{e:D'cond}) (see Davis~\cite{Da83}). Other examples of time series that satisfy (\ref{e:D'cond}), related to stochastic volatility models and ARMAX processes, can be found in Davis and Mikosch~\cite{DaMi09} and Ferreira and Canto e Castro~\cite{FeCa08}. This condition, together with the strong mixing property, assures that, as in the i.i.d.~case, the extremes of the sequence $(Z_{i})$ are isolated. This corresponds to the situation when the extremal index $\theta$ of the sequence $(Z_{i})$, which can be interpreted as the reciprocal mean cluster size of large exceedances (c.f.~Hsing et al.~\cite{HHL88}), is equal to $1$. Recall here that a strictly stationary sequence of random variables $(\xi_{n})$ has extremal index $\theta$ if for every $\tau >0$ there exists a sequence of real numbers $(u_{n})$ such that
\begin{equation*}\label{eq:eindex}
 \lim_{n \to \infty} n \Pr ( \xi_{1} > u_{n}) \to \tau \qquad \textrm{and} \qquad \lim_{n \to \infty} \Pr \bigg( \max_{1 \leq i \leq n} \xi_{i} \leq u_{n} \bigg) \to e^{-\theta \tau}.
\end{equation*}
It holds that $\theta \in [0,1]$. Recall also that a sequence $(\xi_{n})$ is strongly mixing if $\alpha (n) \to 0$ as $n \to \infty$, where
$$\alpha (n) = \sup \{|\Pr (A \cap B) - \Pr(A) \Pr(B)| : A \in \mathcal{F}_{-\infty}^{0}, B \in \mathcal{F}_{n}^{\infty} \}$$
and $\mathcal{F}_{k}^{l} = \sigma( \{ \xi_{i} : k \leq i \leq l \} )$ for $-\infty \leq k \leq l \leq \infty$. For some related results on limit theory for moving averages with random coefficients we refer to Hult and Samorodnitsky~\cite{HuSa08} and Kulik~\cite{Ku06}.

Our aim in this paper is to join the convergence relations (\ref{eq:fconvVn}) and (\ref{eq:fconvMn}) into a single one, or more precisely, to find sufficient conditions on moving average processes with weakly dependent innovations and random coefficients such that, with respect to some Skorokhod topology on $D([0,1], \mathbb{R}^{2})$
\be\label{eq:fconvVnMn}
 L_{n}(\,\cdot\,) := (V_{n}(\,\cdot\,), M_{n}(\,\cdot\,)) \dto (\widetilde{C} V(\,\cdot\,), M(\,\cdot\,)) \qquad \textrm{as} \ n \to \infty.
\ee
Note that if we prove (\ref{eq:fconvVnMn}) then it will follow directly that the functional convergence of partial maxima processes in (\ref{eq:fconvMn}) holds also in the case when the innovations are weakly dependent.

%The mixing condition that will be appropriate for our considerations is $\beta$--mixing. A sequence $(\xi_{n})$ is $\beta$--mixing if $\beta (n) \to %0$ as $n \to \infty$, where
%$$\beta (n) = \frac{1}{2}\sup_{\mathcal{A}, \mathcal{B}} \sum_{i\in I}\sum_{j \in J} |\Pr(A_{i} \cap B_{j}) - \Pr(A_{i}) \Pr(B_{j})|,$$
%where the supremum is taken over all finite partitions $\mathcal{A}=\{A_{i} : i \in I\}$ and $\mathcal{B}=\{ B_{j}: j \in J\}$ such that the sets %$A_{i}$ are measurable with respect to $\mathcal{F}_{-\infty}^{0}$, and the sets $B_{j}$ are measurable with respect to $\mathcal{F}_{n}^{\infty}$.

In proving relation (\ref{eq:fconvVnMn}) we will turn our attention first to finite order moving averages, and then to the general case of infinite order moving average processes. For $Z_{1}$ we assume the already mentioned standard regularity conditions:
  \begin{eqnarray}\label{e:oceknula}
  % \nonumber to remove numbering (before each equation)
    \mathrm{E} Z_{1}=0, & & \textrm{if} \ \ \alpha \in (1,2),  \\
    Z_{1} \ \textrm{is symmetric}, & & \textrm{if} \ \ \alpha=1.\label{e:sim}
  \end{eqnarray}
In the case $\alpha \in [1,2)$ we will need to assume the following condition to deal with small jumps:
 \begin{equation}\label{e:vsvcond}
 \lim_{u \downarrow 0} \limsup_{n \to \infty}~\Pr \bigg[ \max_{1 \leq k \leq n} \bigg| \sum_{i=1}^{k}  \bigg( \frac{Z_{i}}{a_{n}} 1_{\big\{ \frac{|Z_{i}|}{a_{n}} \leq u\big\}} - \mathrm{E} \bigg( \frac{Z_{i}}{a_{n}} 1_{\big\{ \frac{|Z_{i}|}{a_{n}} \leq u \big\}} \bigg) \bigg) \bigg| > \epsilon \bigg]=0
 \end{equation}
for all $\epsilon >0$. This condition holds if the sequence $(Z_{i})$ is $\rho$-mixing at a certain rate (see Lemma 4.8 in Tyran-Kami\'{n}ska~\cite{Ty10a}). In case $\alpha \in (0,1)$ it is a simple consequence of regular variation and Karamata's theorem. Similar conditions are standardly used in the limit theory for partial sums, see~\cite{AvTa92, BKS, DuRe78, Ty10a}.
For infinite order moving averages, beside condition (\ref{e:momcond}) we will require also some other moment conditions, which will be specified latter in Section~\ref{S:InfiniteMA}.

 Since the stochastic processes $V_{n}$ and $M_{n}$ converge (separately) in the space $D([0,1], \mathbb{R})$ equipped with the $M_{2}$ topology, for the convergence in relation (\ref{eq:fconvVnMn}) we will use the weak $M_{2}$ topology.
 %In general the functional convergence for $L_{n}$ fails to hold in the standard $M_{2}$ topology on $D([0,1], \mathbb{R}^{2})$ (???).
  Since for partial maxima processes, functional convergence actually holds in the stronger $M_{1}$ topology, it is possible to obtain also a joint convergence of $L_{n}$ in the $M_{2}$ topology on the first coordinate and in the $M_{1}$ topology on the second coordinate, see Remark~\ref{r:M2M1} below. In general, functional $M_{1}$ convergence of partial sum processes fails to hold, as for instance, in the case of moving averages with i.i.d.~heavy-tailed innovations $Z_{i}$ and deterministic coefficients $C_{0}=1$, $C_{1}=-1$, $C_{2}=1$ and $C_{i}=0$ for $i \geq 3$:
$$ X_{i}= Z_{i} - Z_{i-1} + Z_{i-2}, \qquad i \in \mathbb{Z}.$$
This shows that in general functional convergence of $L_{n}$ does not hold in the (weak) $M_{1}$ topology.

The paper is organized as follows. In Section~\ref{S:Mtop} we recall definitions of weak and strong Skorohod $M_{1}$ and $M_{2}$ topologies. In Section~\ref{S:pomocno} we introduce some basic notions about joint regular variation and point processes, and obtain some auxiliary results that will be used in Section~\ref{S:FiniteMA} in establishing the limiting relation (\ref{eq:fconvVnMn}) for finite order moving average processes with weakly dependent innovations and random coefficients. Then in Section~\ref{S:InfiniteMA} we
extend this result to infinite order moving average processes.

\section{Skorokhod $M_{1}$ and $M_{2}$ topologies}\label{S:Mtop}

We start with the definition of the Skorokhod weak $M_{2}$ topology in a general space $D([0,1], \mathbb{R}^{d})$ of $\mathbb{R}^{d}$--valued c\`{a}dl\`{a}g functions on
$[0,1]$. It is standardly defined using completed graphs and their parametric representations.

For $x \in D([0,1],
\mathbb{R}^{d})$ the completed (thick) graph of $x$ is the set
\[
  G_{x}
  = \{ (t,z) \in [0,1] \times \mathbb{R}^{d} : z \in [[x(t-), x(t)]]\},
\]
where $x(t-)$ is the left limit of $x$ at $t$ and $[[a,b]]$ is the product segment, i.e.
$[[a,b]]=[a_{1},b_{1}] \times [a_{2},b_{2}] \ldots \times [a_{d},b_{d}]$
for $a=(a_{1}, a_{2}, \ldots, a_{d}), b=(b_{1}, b_{2}, \ldots, b_{d}) \in
\mathbb{R}^{d}$. We define an
order on the graph $G_{x}$ by saying that $(t_{1},z_{1}) \le
(t_{2},z_{2})$ if either (i) $t_{1} < t_{2}$ or (ii) $t_{1} = t_{2}$
and $|x_{j}(t_{1}-) - z_{1j}| \le |x_{j}(t_{2}-) - z_{2j}|$
for all $j=1, 2, \ldots, d$. The relation $\le$ induces only a partial
order on the graph $G_{x}$. A weak $M_{2}$ parametric representation
of the graph $G_{x}$ is a continuous function $(r,u)$
mapping $[0,1]$ into $G_{x}$, such that $r$ is nondecreasing with $r(0)=0$, $r(1)=1$ and $u(1)=x(1)$ ($r$ is the
time component and $u$ the spatial component). Denote by $\Pi_{w}^{M_{2}}(x)$ the set of weak $M_{2}$
parametric representations of the graph $G_{x}$. For $x_{1},x_{2}
\in D([0,1], \mathbb{R}^{d})$ define
\[
  d_{w}(x_{1},x_{2})
  = \inf \{ \|r_{1}-r_{2}\|_{[0,1]} \vee \|u_{1}-u_{2}\|_{[0,1]} : (r_{i},u_{i}) \in \Pi_{w}^{M_{2}}(x_{i}), i=1,2 \},
\]
where $\|x\|_{[0,1]} = \sup \{ \|x(t)\| : t \in [0,1] \}$ for $x \colon [0,1] \to \mathbb{R}^{k}$, with $\| \cdot \|$ denoting the max-norm on $\mathbb{R}^{k}$. Now we define the weak $M_{2}$ topology sequentially by saying that a sequence $(x_{n})_{n}$ converges to $x$ in $D([0,1], \mathbb{R}^{d})$ in the weak Skorokhod $M_{2}$ (or shortly $WM_{2}$)
topology if $d_{w}(x_{n},x)\to 0$ as $n \to \infty$.

If we replace above the graph $G_{x}$ with the completed (thin) graph
\[
  \Gamma_{x}
  = \{ (t,z) \in [0,1] \times \mathbb{R}^{d} : z= \lambda x(t-) + (1-\lambda)x(t) \ \text{for some}\ \lambda \in [0,1] \},
\]
and a weak $M_{2}$ parametric representation with a strong $M_{2}$ parametric representation (i.e. a continuous function $(r,u)$ mapping $[0,1]$ onto $\Gamma_{x}$ such that $r$ is nondecreasing), then we obtain the standard (or strong) $M_{2}$ topology. This topology is stronger than the weak $M_{2}$ topology, but they coincide if $d=1$.

Note that in $M_{2}$ parametric representations $(r,u)$ we required that only the time component $r$ is nondecreasing. If we also require that the spatial component $u$ is nondecreasing, then we obtain $M_{1}$ parametric representations and (weak and strong) Skorokhod $M_{1}$ topologies, which are stronger than the corresponding $M_{2}$ topologies.

Often the following characterization of the $M_{2}$ topology with the Hausdorff metric on the spaces of graphs is useful. For $x_{1},x_{2} \in D([0,1], \mathbb{R}^{d})$, the $M_{2}$ distance between $x_{1}$ and $x_{2}$ is given by
$$ d_{M_{2}}(x_{1}, x_{2}) = \bigg(\sup_{a \in \Gamma_{x_{1}}} \inf_{b \in \Gamma_{x_{2}}} d(a,b) \bigg) \vee \bigg(\sup_{a \in \Gamma_{x_{2}}} \inf_{b \in \Gamma_{x_{1}}} d(a,b) \bigg),$$
where $d$ is the metric induced by the maximum norm on $\mathbb{R}^{d+1}$.
The metric $d_{M_{2}}$ induces the strong $M_{2}$ topology.
The weak $M_{2}$ topology on $D([0,1], \mathbb{R}^{d})$
coincides with the (product) topology induced by the metric
\begin{equation}\label{e:defdp}
 d_{p}^{M_{2}}(x_{1},x_{2})= \max_{j=1,\ldots,d}d_{M_{2}}(x_{1j},x_{2j})
\end{equation}
 for $x_{i}=(x_{i1}, \ldots, x_{id}) \in D([0,1],
 \mathbb{R}^{d})$, $i=1,2$. For detailed discussion of the strong and weak $M_{2}$ topologies we refer to Whitt~\cite{Whitt02}, sections 12.10--12.11.
 Denote by $D_{\uparrow}([0,1], \mathbb{R}^{d})$ the subspace of functions $x$ in $D([0,1], \mathbb{R}^{d})$ for which the coordinate functions $x_{i}$ are non-decreasing for all $i=1,\ldots,d$. For simplicity of notation let $D^{d} \equiv D([0,1], \mathbb{R}^{d})$ and $D^{d}_{\uparrow} \equiv D_{\uparrow}([0,1], \mathbb{R}^{d})$.

Similar to relation (\ref{e:defdp}) for the weak $M_{2}$ topology, the weak $M_{1}$ topology on $D^{d}$ coincides with the topology induced by the metric
\begin{equation}\label{e:defdpM1}
 d_{p}^{M_{1}}(x_{1},x_{2})= \max_{j=1,\ldots,d}d_{M_{1}}(x_{1j},x_{2j})
\end{equation}
 for $x_{i}=(x_{i1}, \ldots, x_{id}) \in D^{d}$, $i=1,2$ (see Whitt~\cite{Whitt02}, Theorem 12.5.2). Here $d_{M_{1}}$ denotes the $M_{1}$ metric on $D^{1}$, defined by
$$ d_{M_{1}}(y_{1},y_{2})
  = \inf \{ \|r_{1}-r_{2}\|_{[0,1]} \vee \|u_{1}-u_{2}\|_{[0,1]} : (r_{i},u_{i}) \in \Pi^{M_{1}}(y_{i}), i=1,2 \}$$
for $y_{1},y_{2} \in D^{1}$, where $\Pi^{M_{1}}(y)$ is the set of $M_{1}$ parametric representations of the completed graph $\Gamma_{y}$, i.e.~continuous nondecreasing functions $(r,u)$ mapping $[0,1]$ onto $\Gamma_{y}$.

In the next section we will use the following three lemmas. The first one is about preservation of weak $M_{1}$ convergence of stochastic processes under transformations that add certain c\`adl\`ag functions to the first components of the underlying processes. This results is a simple consequence of $M_{1}$ continuity of addition, the continuous mapping theorem and Slutsky's theorem. It can be proven similarly as Lemma 1 in Krizmani\'{c}~\cite{Kr18} (for the $M_{2}$ convergence). The remaining two lemmas deal with $M_{1}$ continuity of multiplication and maximum of two c\`{a}dl\`{a}g functions. The first one is based on Theorem 13.3.2 in Whitt~\cite{Whitt02}, and the second one follows easily from the fact that for monotone functions $M_{1}$ convergence is equivalent to point-wise convergence in a dense subset of $[0,1]$ including $0$ and $1$ (cf.~Whitt~\cite{Whitt02}, Corollary 12.5.1). Denote by $\textrm{Disc}(x)$ the set of discontinuity points of $x \in D^{1}$.

\begin{lem}\label{l:weakM1transf}
Let $(A_{n}, B_{n}, C_{n})$, $n=0,1,2,\ldots$, be stochastic processes in $D^{3}$ such that, as $n \to \infty$,
\begin{equation}\label{e:lemM2}
(A_{n}, B_{n}, C_{n}) \dto (A_{0}, B_{0}, C_{0})
\end{equation}
in $D^{3}$ with the weak $M_{1}$ topology. Suppose $x_{n}$, $n=0,1,2,\ldots$, are elements of $D^{1}$ with $x_{0}$ being continuous, such that, as $n \to \infty$,
$$x_{n}(t) \to x_{0}(t) $$
uniformly in $t$. Then
$$ (A_{n} +x_{n}, B_{n}, C_{n}) \dto (A_{0} + x_{0}, B_{0}, C_{0})$$
in $D^{3}$ with the weak $M_{1}$ topology.
\end{lem}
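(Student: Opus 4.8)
The plan is to reduce the joint statement to the two ingredients the author has flagged: $M_1$ continuity of addition and Slutsky's theorem, packaged through the product characterization of the weak $M_1$ topology in (\ref{e:defdpM1}). First I would observe that by (\ref{e:defdpM1}) weak $M_1$ convergence in $D^3$ is equivalent to joint convergence whose metric is the coordinatewise maximum $\max_{j} d_{M_1}$; so the hypothesis (\ref{e:lemM2}) is really componentwise control bundled into a joint law. The deterministic functions $x_n \to x_0$ with $x_0$ continuous are to be treated as (degenerate) random elements converging in probability: since $x_n \to x_0$ uniformly and uniform convergence is stronger than $M_1$ convergence on $D^1$, the constant sequence $x_n$ converges to the constant $x_0$ in the $M_1$ metric, hence trivially in distribution, and in fact the convergence is deterministic.

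Next I would invoke Slutsky's theorem in the form appropriate for metric-space-valued random elements: if $(A_n,B_n,C_n) \dto (A_0,B_0,C_0)$ and $x_n \to x_0$ with $x_0$ deterministic, then the joined quadruple $(A_n,B_n,C_n,x_n) \dto (A_0,B_0,C_0,x_0)$ in $D^3 \times D^1$ (with the product of weak $M_1$ topologies). This is the standard device: convergence in distribution to a constant upgrades to joint convergence in distribution when paired with a weakly convergent sequence. Then I would apply the continuous mapping theorem to the addition-in-the-first-coordinate map $\Phi\colon (a,b,c,x) \mapsto (a+x, b, c)$. The subtle point making this work is that addition $D^1 \times D^1 \to D^1$ is \emph{not} continuous everywhere in the $M_1$ topology, but it \emph{is} continuous at pairs $(a,x)$ whenever $x$ is continuous and the two functions have no common discontinuities; here $x_0$ is assumed continuous, so $\mathrm{Disc}(A_0) \cap \mathrm{Disc}(x_0) = \varnothing$ holds surely. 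By Theorem 13.3.2 in Whitt, $\Phi$ is continuous at every point of the support of the limit law $(A_0,B_0,C_0,x_0)$, so the continuous mapping theorem yields $\Phi(A_n,B_n,C_n,x_n) \dto \Phi(A_0,B_0,C_0,x_0)$, which is exactly the claimed convergence $(A_n + x_n, B_n, C_n) \dto (A_0 + x_0, B_0, C_0)$.

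The main obstacle, and the step deserving the most care, is verifying the hypotheses of the $M_1$-continuity result for addition \emph{at the joint level} rather than marginally. Because we work in the product (weak) $M_1$ topology, one must confirm that perturbing only the first coordinate by adding $x_n$ does not disturb the second and third coordinates, and that the continuity set of $\Phi$ has full measure under the limit law. The continuity of $x_0$ is what guarantees the disjoint-discontinuity condition almost surely, and it is precisely this hypothesis in the statement that cannot be dropped — if $x_0$ were allowed a jump coinciding with a jump of $A_0$, $M_1$ addition could fail to be continuous and the conclusion would break. As the author notes, the whole argument parallels Lemma 1 in Krizmani\'{c}~\cite{Kr18} for the $M_2$ case, the only change being the use of $M_1$ parametric representations and the $M_1$ version of the addition-continuity theorem; so beyond the continuity-set bookkeeping, the remaining steps are routine applications of Slutsky and the continuous mapping theorem.
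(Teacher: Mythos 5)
Your proposal is correct and follows essentially the same route the paper indicates for this lemma: uniform convergence of the deterministic $x_n$ gives $M_1$ convergence, Slutsky's theorem upgrades this to joint convergence with $(A_n,B_n,C_n)$, and the continuous mapping theorem applied to first-coordinate addition finishes the argument, with the continuity of $x_0$ guaranteeing the no-common-discontinuities condition under which $M_1$ addition is continuous. The only blemish is the citation: the $M_1$ continuity of addition at pairs without common discontinuities is Theorem 12.7.3 (or Corollary 12.7.1) in Whitt, not Theorem 13.3.2, which is the multiplication result the paper invokes for its Lemma~\ref{l:contmultpl}.
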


\begin{lem}\label{l:contmultpl}
Suppose that $x_{n} \to x$ and $y_{n} \to y$ in $D^{1}$ with the $M_{1}$ topology. If for each $t \in \textrm{Disc}(x) \cap \textrm{Disc}(y)$, $x(t)$, $x(t-)$, $y(t)$ and $y(t-)$ are all nonnegative and $[x(t)-x(t-)][y(t)-y(t-)] \geq 0$, then $x_{n}y_{n} \to xy$ in $D^{1}$ with the $M_{1}$ topology, where $(xy)(t) = x(t)y(t)$ for $t \in [0,1]$.
\end{lem}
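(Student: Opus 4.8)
The plan is to work directly with the parametric-representation characterization of the (scalar) $M_{1}$ topology, adapting the construction that underlies Theorem 13.3.2 in Whitt~\cite{Whitt02}. Recall that since $d=1$ the weak and strong $M_{1}$ topologies coincide, and $d_{M_{1}}(x_{n}y_{n},xy)\to 0$ is equivalent to the existence of $M_{1}$ parametric representations of $xy$ and $x_{n}y_{n}$ that converge uniformly (both in time and in space). The guiding idea is that the pointwise product of two suitably \emph{synchronized} spatial components is again a legitimate spatial component, exactly as the sum is in the proof of continuity of addition; the nonnegativity and the same-sign condition on the jumps are precisely what upgrade ``sum works'' to ``product works''.

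First I would produce, following the construction in the proof of Theorem 13.3.2 in Whitt~\cite{Whitt02}, synchronized parametric representations: a single nondecreasing time component $r$ together with spatial components $u,v$ such that $(r,u)$ parametrizes $x$, $(r,v)$ parametrizes $y$, and analogously $(r_{n},u_{n})$, $(r_{n},v_{n})$ parametrize $x_{n}$, $y_{n}$, with $\|r_{n}-r\|_{[0,1]}\to 0$, $\|u_{n}-u\|_{[0,1]}\to 0$ and $\|v_{n}-v\|_{[0,1]}\to 0$. The possibility of choosing a \emph{common} time component $r$ (respectively $r_{n}$) for both factors simultaneously is exactly the place where the matching-jump hypothesis enters in Whitt's argument, and it is the main obstacle of the proof; the remaining steps are comparatively routine.

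Next I would verify that $(r,uv)$ is an $M_{1}$ parametric representation of $xy$ and $(r_{n},u_{n}v_{n})$ one of $x_{n}y_{n}$, where products are taken pointwise. Continuity of $uv$ and monotonicity of the time component are immediate, and since $x,y$ are c\`{a}dl\`{a}g one has $(xy)(t-)=x(t-)y(t-)$, so the completed graph of $xy$ at a jump time $t$ is the segment $[[x(t-)y(t-),x(t)y(t)]]$. At a time $t\in\textrm{Disc}(x)\setminus\textrm{Disc}(y)$ the factor $v$ is locally the constant $y(t)$, so $uv$ traverses this segment monotonically by scaling, with no sign restriction needed; the symmetric statement holds on $\textrm{Disc}(y)\setminus\textrm{Disc}(x)$. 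At a common discontinuity $t\in\textrm{Disc}(x)\cap\textrm{Disc}(y)$ the hypotheses $x(t),x(t-),y(t),y(t-)\ge 0$ together with $[x(t)-x(t-)][y(t)-y(t-)]\ge 0$ guarantee that, over the common parameter subinterval on which $r$ equals $t$, the components $u$ and $v$ move monotonically in the same direction through nonnegative values; hence their product is monotone and, by the intermediate value theorem, sweeps out exactly the segment $[[x(t-)y(t-),x(t)y(t)]]$. This is the step in which the stated conditions are indispensable: without them the product could fail to traverse the jump segment monotonically, breaking the $M_{1}$ parametrization.

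Finally, from $\|u_{n}-u\|_{[0,1]}\to 0$ and $\|v_{n}-v\|_{[0,1]}\to 0$, together with the uniform boundedness of $u_{n},v_{n}$ (a consequence of uniform convergence and boundedness of the continuous limits), the elementary estimate $\|u_{n}v_{n}-uv\|_{[0,1]}\le\|u_{n}\|_{[0,1]}\|v_{n}-v\|_{[0,1]}+\|v\|_{[0,1]}\|u_{n}-u\|_{[0,1]}$ gives $\|u_{n}v_{n}-uv\|_{[0,1]}\to 0$. Combined with $\|r_{n}-r\|_{[0,1]}\to 0$ this yields $d_{M_{1}}(x_{n}y_{n},xy)\to 0$, as required. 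I note in passing that when $x,y$ and the approximating sequences are bounded away from $0$, the whole argument collapses to a short reduction: $\log$ is an increasing homeomorphism, so it preserves $M_{1}$ convergence and the signs of jumps, and applying Theorem 13.3.2 in Whitt~\cite{Whitt02} to $\log x_{n}+\log y_{n}=\log(x_{n}y_{n})$ followed by $\exp$ gives the claim; the synchronization step above is the device that removes the assumption of positivity away from the common jumps.
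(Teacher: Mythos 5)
Your proposal does not correspond to anything in the paper, because the paper gives no proof of Lemma~\ref{l:contmultpl} at all: the lemma is a restatement on $[0,1]$ of Theorem 13.3.2 of Whitt~\cite{Whitt02}, and the paper justifies it solely by that citation. Judged as a self-contained argument, your sketch has a genuine gap at its decisive step, namely the claim that $(r_{n},u_{n}v_{n})$ is an $M_{1}$ parametric representation of $x_{n}y_{n}$. The verification you supply (monotone traversal of jump segments through nonnegative values) uses the hypotheses of the lemma, but those hypotheses constrain only the limits $x$ and $y$; they say nothing about $x_{n}$ and $y_{n}$. $M_{1}$ convergence prevents large opposite-sign common jumps in the prelimit functions for large $n$, but not small ones, and not small negative excursions. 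Concretely, take $x=y=1_{[1/2,1]}$, $x_{n}=1_{[1/2,1]}$, and $y_{n}=1_{[1/2-1/n,1]}-n^{-1}1_{[1/2,1]}$, so that $x_{n}\to x$ and $y_{n}\to y$ in $M_{1}$ and the hypotheses on $(x,y)$ hold, yet at $t=1/2$ the function $x_{n}$ jumps up by $1$ while $y_{n}$ jumps down by $1/n$. The jump segment of $x_{n}y_{n}$ at $1/2$ is $[0,1-1/n]$, and nothing in your first step prevents the synchronized representations from letting $u_{n}$ climb to $1$ before $v_{n}$ begins to descend; in that case $u_{n}v_{n}$ attains the value $1$, so the path $(r_{n},u_{n}v_{n})$ exits the completed graph of $x_{n}y_{n}$ and fails to be monotone on the flat spot, hence is not a parametric representation, and the bound $d_{M_{1}}(x_{n}y_{n},xy)\le\|r_{n}-r\|_{[0,1]}\vee\|u_{n}v_{n}-uv\|_{[0,1]}$ that your argument needs is unavailable. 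Repairing this --- either by constructing for each $n$ a special synchronization adapted to the fine jump structure of $(x_{n},y_{n})$, or by proving that $u_{n}v_{n}$ is within $o(1)$ of a genuine representation via persistence of large jumps and their signs under $M_{1}$ convergence --- is the real content of Whitt's theorem and is not ``comparatively routine''; if, on the other hand, your first step is read as importing Whitt's construction wholesale, the proposal collapses to the citation the paper already makes, and the material you add is exactly the part that fails.

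Two smaller points reinforce this. The existence of a common time component is not where the jump hypothesis enters: synchronized, uniformly convergent representations exist for any pair of $M_{1}$-convergent sequences, since the weak $M_{1}$ topology on $D^{2}$ coincides with the product topology (Theorem 12.5.2 in Whitt~\cite{Whitt02}, quoted in Section~\ref{S:Mtop} of the paper); the hypotheses are needed only to make the product path a legitimate spatial component, which you verify correctly for the limit pair and which is precisely what breaks for the prelimit pairs. Also, your closing log-exp remark applies ``Theorem 13.3.2'' to $\log x_{n}+\log y_{n}$ as though it were the addition theorem: preservation of $M_{1}$ convergence under addition with common-sign jumps is Theorem 12.7.3 in Whitt~\cite{Whitt02}, while Theorem 13.3.2 is the multiplication statement you are trying to prove, so as written that remark is circular (with the correct citation it is valid, but only under the extra assumption that all functions are bounded away from $0$).
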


\begin{lem}\label{l:weakM2transf}
The function $h \colon D^{2}_{\uparrow} \to D^{1}_{\uparrow}$ defined by
$h(x,y)= x \vee y$, where
$$ (x \vee y)(t) = x(t) \vee y(t), \qquad t \in [0,1],$$
is continuous
when $D^{2}_{\uparrow}$ is endowed with the weak $M_{1}$ topology and $D^{1}_{\uparrow}$ is endowed with the standard $M_{1}$ topology.
\end{lem}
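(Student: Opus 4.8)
The plan is to reduce the claim to the well-known fact recorded at the end of the excerpt: for monotone functions, $M_1$ convergence is equivalent to pointwise convergence on a dense subset of $[0,1]$ that contains the endpoints $0$ and $1$ (Whitt, Corollary 12.5.1). So suppose $(x_n, y_n) \to (x, y)$ in $D^2_{\uparrow}$ with the weak $M_1$ topology. By the product characterization \eqref{e:defdpM1} of the weak $M_1$ topology, this is equivalent to $x_n \to x$ and $y_n \to y$ separately in $D^1$ with the (strong = weak, since $d=1$) $M_1$ topology. Since $x, y \in D^1_{\uparrow}$ are nondecreasing and $x \vee y$ is again nondecreasing, all functions in sight are monotone, and the cited equivalence applies to each of them.

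First I would fix a countable dense set $T \subseteq [0,1]$ containing $0$ and $1$ on which simultaneously $x_n(t) \to x(t)$ and $y_n(t) \to y(t)$; such a $T$ exists because each of $x$ and $y$ has at most countably many discontinuities, so their common set of continuity points is dense, and $M_1$ convergence of monotone functions gives pointwise convergence at every continuity point of the limit (and one may always adjoin $0$ and $1$, using $u_n(1) = x_n(1) \to x(1) = u(1)$ from the parametric representations, together with the boundary behaviour at $0$). Then for every $t \in T$ one has
\[
(x_n \vee y_n)(t) = x_n(t) \vee y_n(t) \longrightarrow x(t) \vee y(t) = (x \vee y)(t),
\]
simply because the binary maximum $\R^2 \to \R$ is continuous. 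Thus $x_n \vee y_n$ converges pointwise to $x \vee y$ on the dense set $T \ni 0, 1$.

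Finally I would invoke the equivalence in the reverse direction: since $x_n \vee y_n$ and $x \vee y$ are all nondecreasing and we have pointwise convergence on a dense set containing $0$ and $1$, it follows that $x_n \vee y_n \to x \vee y$ in $D^1_{\uparrow}$ with the standard $M_1$ topology. This is exactly the asserted continuity of $h$. I do not expect a serious obstacle here; the one point deserving a little care is the handling of the endpoints, since the dense-set criterion is stated for interior continuity points and one must separately confirm convergence at $0$ and $1$. At $t=1$ this is guaranteed by the normalization $u_i(1) = x_i(1)$ built into every $M_1$ parametric representation, and at $t=0$ it follows from monotonicity together with the fact that the parametric representations start at $(0, x(0))$; alternatively one simply chooses $T$ to include $0$ and $1$ and notes that both are continuity points of the monotone limits after a harmless modification, or argues directly that the values at the endpoints converge. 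Everything else is a direct application of the monotone-function characterization of $M_1$ convergence and the continuity of the max operation.
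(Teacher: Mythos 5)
Your proposal is correct and takes essentially the same route as the paper: the paper justifies this lemma precisely by the monotone-function characterization of $M_1$ convergence (Whitt, Corollary 12.5.1, pointwise convergence on a dense subset of $[0,1]$ containing $0$ and $1$), combined with the product-metric characterization of the weak $M_1$ topology, which is exactly your argument. Your explicit handling of the endpoints $0$ and $1$ merely fills in detail the paper leaves implicit.
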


\section{Joint regular variation, point processes and sum-maximum functional}
\label{S:pomocno}

We say that a strictly stationary sequence of random variables $(\xi_{n})_{n \in \mathbb{Z}}$ is (jointly) regularly varying with index
$\alpha >0$ if for any nonnegative integer $k$ the
$k$-dimensional random vector $\xi = (\xi_{1}, \ldots , \xi_{k})$ is
multivariate regularly varying with index $\alpha$, i.e.\ there
exists a random vector $\Theta$ on the unit sphere
$\mathbb{S}^{k-1} = \{ x \in \mathbb{R}^{k} : \|x\|=1 \}$ such
that for every $u >0$, as $x \to \infty$,
 \begin{equation}\label{e:regvar1}
   \frac{\Pr(\|\xi\| > ux,\,\xi / \| \xi \| \in \cdot \, )}{\Pr(\| \xi \| >x)}
    \wto u^{-\alpha} \Pr( \Theta \in \cdot \,),
 \end{equation}
where the arrow ''$\wto$'' denotes the weak convergence of finite measures.
There is a convenient characterization of joint regular variation due to Basrak and Segers~\cite{BaSe}:~it is necessary and
sufficient that there exists a process $(Y_n)_{n \in \mathbb{Z}}$
with $\Pr(|Y_0| > y) = y^{-\alpha}$ for $y \geq 1$ such that, as $x
\to \infty$,
\begin{equation}\label{e:tailprocess1}
  \bigl( (x^{-1}\ \xi_n)_{n \in \mathbb{Z}} \, \big| \, | \xi_0| > x \bigr)
  \fidi (Y_n)_{n \in \mathbb{Z}},
\end{equation}
where "$\fidi$" denotes convergence of finite-dimensional
distributions. The process $(Y_{n})$ is called
the tail process of $(\xi_{n})$.

Let $(Z_{i})_{i \in \mathbb{Z}}$ be a strictly stationary and strongly mixing sequence of regularly varying random variables with index $\alpha \in (0,2)$, such that the local dependence condition $D'$ and conditions $(\ref{e:oceknula})$ and $(\ref{e:sim})$ hold. If $\alpha \in [1,2)$, also suppose that condition $(\ref{e:vsvcond})$ holds.
%Since $\beta$--mixing implies strong mixing (see Bradley~\cite{Br05}), the sequence $(Z_{i})$ is also strongly mixing.
Condition $D'$ and strong mixing imply that the extremes of the sequence $(Z_{i})$ are isolated, i.e.~$\theta=1$ (see Leadbetter and Rootz\'{e}n~\cite{LeRo88}, page 439, and Leadbetter et al.~\cite{LLR83}, Theorem 3.4.1). They also imply that $(Z_{i})$ is jointly regularly varying with the tail process $(Y_{i})$ being the same as in the i.i.d.~case, that is, $Y_{i}=0$ for $i \neq 0$, and $Y_{0}$  as described above (Basrak et al.~\cite{BKS}, Example 4.1). This in particular means that $(Y_{i})$ has no two values of the opposite sign.

Define the time-space point processes
\begin{equation*}\label{E:ppspacetime}
 N_{n} = \sum_{i=1}^{n} \delta_{(i / n,\,Z_{i} / a_{n})} \qquad \textrm{for all} \ n\in \mathbb{N},
\end{equation*}
with $a_{n}$ as in (\ref{eq:niz}). The point process convergence for the sequence $(N_{n})$ on the space $[0,1] \times \EE$ was obtained by Basrak and Tafro~\cite{BaTa16} under joint regular variation and the following two weak dependence conditions.

\begin{cond}\label{c:mixcond1}
There exists a sequence of positive integers $(r_{n})$ such that $r_{n} \to \infty $ and $r_{n} / n \to 0$ as $n \to \infty$ and such that for every nonnegative continuous function $f$ on $[0,1] \times \mathbb{E}$ with compact support, denoting $k_{n} = \lfloor n / r_{n} \rfloor$, as $n \to \infty$,
\begin{equation}\label{e:mixcon}
 \E \biggl[ \exp \biggl\{ - \sum_{i=1}^{n} f \biggl(\frac{i}{n}, \frac{Z_{i}}{a_{n}}
 \biggr) \biggr\} \biggr]
 - \prod_{k=1}^{k_{n}} \E \biggl[ \exp \biggl\{ - \sum_{i=1}^{r_{n}} f \biggl(\frac{kr_{n}}{n}, \frac{Z_{i}}{a_{n}} \biggr) \biggr\} \biggr] \to 0.
\end{equation}
\end{cond}

\begin{cond}\label{c:mixcond2}
There exists a sequence of positive integers $(r_{n})$ such that $r_{n} \to \infty $ and $r_{n} / n \to 0$ as $n \to \infty$ and such that for every $u > 0$,
\begin{equation}
\label{e:anticluster}
  \lim_{m \to \infty} \limsup_{n \to \infty}
  \Pr \biggl( \max_{m \leq |i| \leq r_{n}} | Z_{i} | > ua_{n}\,\bigg|\,| Z_{0}|>ua_{n} \biggr) = 0.
\end{equation}
\end{cond}
Condition~\ref{c:mixcond1} is implied by the strong mixing property (see Krizmani\'{c}~\cite{Kr10},~\cite{Kr16}). Condition~\ref{c:mixcond2} follows from condition $D'$, for the latter implies
\[
    \lim_{n \to \infty} n \sum_{i=1}^{r_{n}} \Pr \bigg( \frac{|Z_{0}|}{a_{n}} > u,
    \frac{|Z_{i}|}{a_{n}} > u \bigg) = 0 \quad \textrm{for all} \ u
    >0,
\]
for any sequence of positive integers $(r_{n})$ such that $r_{n} \to \infty$ and $r_{n} / n \to 0$ as $n \to \infty$.
Therefore, in our case, by Theorem 3.1 in Basrak and Tafro~\cite{BaTa16}, as $n \to \infty$,
\begin{equation}\label{e:BaTa}
N_{n} \dto N = \sum_{i}\sum_{j}\delta_{(T_{i}, P_{i}\eta_{ij})}
\end{equation}
in $[0,1] \times \EE$, where $\sum_{i=1}^{\infty}\delta_{(T_{i}, P_{i})}$ is a Poisson process on $[0,1] \times (0,\infty)$
with intensity measure $Leb \times \nu$ where $\nu(\rmd x) = \alpha
x^{-\alpha-1}1_{(0,\infty)}(x)\,\rmd x$, and $(\sum_{j= 1}^{\infty}\delta_{\eta_{ij}})_{i}$ is an i.i.d.~sequence of point processes in $\EE$ independent of $\sum_{i}\delta_{(T_{i}, P_{i})}$ and with common distribution equal to the distribution of $\sum_{j}\delta_{\widetilde{Y}_{j}/L(\widetilde{Y})}$, where $L(\widetilde{Y})= \sup_{j \in \mathbb{Z}}|\widetilde{Y}_{j}|$ and $\sum_{j}\delta_{\widetilde{Y}_{j}}$ is distributed as $( \sum_{j \in \mathbb{Z}} \delta_{Y_j} \,|\, \sup_{i \le -1} | Y_i| \le 1).$ Taking into account the form of the tail process $(Y_{i})$ it holds that $N=\sum_{i}\delta_{(T_{i}, P_{i}\eta_{i0})}$ with $|\eta_{i0}|=1$. Further, by (\ref{eq:pq}) and (\ref{e:tailprocess1}) we obtain
\begin{eqnarray*}
\Pr(\eta_{i0}=1) &= &\Pr(Y_{0}>0)=\Pr(Y_{0}>1) = \lim_{x \to \infty}  \Pr \bigl(x^{-1}Z_{0}>1 \, \big| \, | Z_{0}| > x \bigr)\\[0.5em]
  &=& \lim_{x \to \infty} \frac{\Pr(Z_0 > x)}{\Pr(|Z_0| > x)}=p,
\end{eqnarray*}
and similarly $\Pr(\eta_{i0}=-1)=r$. Hence, denoting $Q_{i}=\eta_{i0}$, the limiting point process in relation (\ref{e:BaTa}) reduces to
\begin{equation}\label{e:BaTa1}
 N = \sum_{i}\delta_{(T_{i}, P_{i}Q_{i})},
\end{equation}
with $\Pr(Q_{i}=1)=p$ and $\Pr(Q_{i}=-1)=r$. Since the sequence $(Q_{i})$ is independent of the Poisson process $\sum_{i=1}^{\infty}\delta_{(T_{i}, P_{i})}$, an application of Proposition 5.2 and Proposition 5.3 in Resnick~\cite{Resnick07} yields that $N$ is a Poisson process with intensity measure $Leb \times \nu'$ where
\begin{eqnarray*}
% \nonumber to remove numbering (before each equation)
  \nu'(\rmd x) &=& \{ \mathrm{E}[Q_{1}^{\alpha} 1_{\{ Q_{1}>0 \}}] 1_{(0,\infty)}(x) + \mathrm{E}[(-Q_{1})^{\alpha} 1_{\{ Q_{1}<0 \}}] 1_{(-\infty,0)}(x) \} \alpha
|x|^{-\alpha-1}\,\rmd x \\[0.6em]
   &=&  (p 1_{(0,\infty)}(x)+r 1_{(-\infty,0)}(x)) \alpha
|x|^{-\alpha-1}\,\rmd x\\[0.6em]
&=& \mu(\rmd x).
\end{eqnarray*}

Fix $0 < u < \infty$ and define the sum-maximum functional
$$ \Phi^{(u)} \colon \mathbf{M}_{p}([0,1] \times \EE) \to D^{1} \times D^{2}_{\uparrow}$$
by
$$ \Phi^{(u)} \Big( \sum_{i}\delta_{(t_{i}, x_{i})} \Big) (t)
  =  \Big( \sum_{t_{i} \leq t}x_{i}\,1_{\{u < |x_{i}| < \infty \}},  \bigvee_{t_{i} \leq t} |x_{i}| 1_{\{ x_{i}>0 \}}, \bigvee_{t_{i} \leq t} |x_{i}| 1_{\{ x_{i}<0 \}} \Big), \quad t \in [0,1]$$
  (with the convention $\vee \emptyset = 0$),
where the space $\mathbf{M}_p([0,1] \times \EE)$ of Radon point
measures on $[0,1] \times \EE$ is equipped with the vague
topology (see Resnick~\cite{Re87}, Chapter 3). Let $\Lambda = \Lambda_{1} \cap \Lambda_{2}$, where
\begin{multline*}
 \Lambda_{1} =
 \{ \eta \in \mathbf{M}_{p}([0,1] \times \EE) :
   \eta ( \{0,1 \} \times \EE) = 0 = \eta ([0,1] \times \{ \pm \infty, \pm u \}) \}, \\[1em]
 \shoveleft \Lambda_{2} =
 \{ \eta \in \mathbf{M}_{p}([0,1] \times \EE) :
  \eta ( \{ t \} \times (u, \infty]) \cdot \eta ( \{ t \} \times [-\infty,-u)) = 0 \\
  \text{for all $t \in [0,1]$} \}.
\end{multline*}
 Observe that the elements
of $\Lambda_2$ have the property that atoms in $[0,1] \times \mathbb{E}_{u}$ with the same time
coordinate are all on the same side of the time axis, where $\EE_{u} = \{ x
\in \EE : |x| >u \}$.

\begin{lem}\label{l:contfunct}
The maximum functional $\Phi^{(u)} \colon \mathbf{M}_{p}([0,1] \times \EE) \to D^{1} \times D^{2}_{\uparrow}$ is continuous on the set
$\Lambda$
when $D^{1} \times D^{2}_{\uparrow}$ is endowed with the weak $M_{1}$ topology.
\end{lem}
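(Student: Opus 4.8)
The plan is to prove continuity of $\Phi^{(u)}$ on $\Lambda$ by verifying the sequential criterion: take an arbitrary $\eta \in \Lambda$ and a sequence $(\eta_n)$ in $\mathbf{M}_p([0,1]\times\EE)$ with $\eta_n \vto \eta$ in the vague topology, and show that $\Phi^{(u)}(\eta_n) \to \Phi^{(u)}(\eta)$ in $D^1 \times D^2_{\uparrow}$ with the weak $M_1$ topology. By the product characterization of the weak $M_1$ topology in \eqref{e:defdpM1}, it suffices to prove coordinatewise $M_1$ convergence of each of the three components on $D^1$. The conditions defining $\Lambda_1$ ensure that $\eta$ places no mass on the temporal boundary $\{0,1\}$ and no mass on the levels $\pm u$ and $\pm\infty$, so that the restriction of $\eta_n$ to the relatively compact region $[0,1]\times\EE_u$ converges to that of $\eta$ while keeping a fixed, finite number of atoms whose locations converge; this is the standard device that lets one pass from vague convergence of measures to convergence of the associated step functions.

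**First I would** fix $\eta = \sum_i \delta_{(t_i,x_i)} \in \Lambda$ and enumerate its finitely many atoms in $[0,1]\times\EE_u$, say at times $t_1 < t_2 < \cdots < t_m$ (atoms sharing a time are handled via $\Lambda_2$, see below). Vague convergence together with the boundary conditions in $\Lambda_1$ forces, for all large $n$, the existence of exactly $m$ atoms of $\eta_n$ in a neighbourhood of these, with positions $(t_i^{(n)}, x_i^{(n)}) \to (t_i, x_i)$. For the summation component $\sum_{t_i\le t} x_i 1_{\{u<|x_i|<\infty\}}$, I would construct explicit $M_1$ parametric representations: the limit function is a pure jump step function, and the approximating functions are step functions with jumps of nearly equal size at nearby times, so one builds parametric representations whose time components align the jump locations $t_i^{(n)} \to t_i$ and whose spatial components interpolate linearly across the completed graph. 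The uniform closeness of jump sizes and times yields $d_{M_1}\to 0$. The two maximum components are treated analogously but are in fact easier, since running maxima of step functions are monotone, and for monotone functions $M_1$ convergence reduces to pointwise convergence on a dense set containing $0$ and $1$ (the fact cited just before Lemma~\ref{l:weakM2transf}); pointwise convergence at continuity points of the limit follows immediately from the convergence of atom positions.

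**The role of $\Lambda_2$** is the crux and the place I expect the main obstacle. The defining property of $\Lambda_2$ is that atoms of $\eta$ in $\EE_u$ sharing a common time coordinate all lie on the same side of zero. This matters precisely for the two maximum components taken jointly with the sum: when several atoms collide at a single limiting time $t_i$, the approximating atoms at times $t_i^{(n)}$ may arrive in different orders for different $n$, and in the $M_1$ (not $J_1$) topology one must ensure that the completed graph of the limit genuinely captures all intermediate values traced out by these near-simultaneous jumps. If atoms of opposite sign shared a time, the positive-part and negative-part maxima would jump simultaneously in a way that the product segment $[[x(t-),x(t)]]$ in the thin completed graph $\Gamma_x$ could fail to reconcile with arbitrary arrival orders, breaking $M_1$ convergence of the vector. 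The same-sign restriction guarantees that within each colliding group only one of the two maximum coordinates is active, so monotonicity is preserved and the completed graphs match up in the limit. I would therefore argue that, under $\Lambda_2$, the limiting segment swept out at each multiple-jump time is exactly an increasing segment in the relevant coordinate, making the parametric representations compatible.

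**The remaining routine verifications** I would dispatch quickly: that $\Phi^{(u)}(\eta)$ indeed lands in $D^1\times D^2_{\uparrow}$ (the maxima are nondecreasing by construction, and all three are càdlàg because $\eta$ restricted to $\EE_u$ is a finite point measure), and that the no-mass conditions at $\pm u$ prevent atoms from appearing or disappearing across the threshold under the vague limit, which is what keeps the number of relevant atoms stable and rules out spurious jumps near the cutoff. Assembling the three coordinatewise $M_1$ convergences through \eqref{e:defdpM1} then gives weak $M_1$ convergence of $\Phi^{(u)}(\eta_n)$ to $\Phi^{(u)}(\eta)$, establishing continuity on $\Lambda$.
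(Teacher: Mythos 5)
Your overall strategy is the same as the paper's: reduce, via the product metric $d_{p}^{M_{1}}$ in \eqref{e:defdpM1} (Theorem 12.5.2 in Whitt~\cite{Whitt02}), to separate $M_{1}$ convergence of the three coordinates, then treat the summation coordinate and the two monotone maximum coordinates. The paper does exactly this, quoting Lemma 3.2 of Basrak et al.~\cite{BKS} for the first coordinate and Proposition 3.1 of Krizmani\'{c}~\cite{Kr22-2} for the other two. The genuine gap in your proposal is that you have located the role of $\Lambda_{2}$ in the wrong coordinate, and as a consequence the one step that actually needs it --- the sum coordinate --- is asserted without justification. Your claim that ``the uniform closeness of jump sizes and times yields $d_{M_{1}}\to 0$'' for the summation component is false in general: if the limit measure had two atoms of \emph{opposite} sign at the same time $t$, the approximating partial-sum paths would jump up and then down (or vice versa) near $t$, overshooting the segment joining $x(t-)$ and $x(t)$, while the limit jumps once by the net amount; the completed graph of the limit does not contain the overshoot, so $M_{1}$ convergence of the first coordinate fails. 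The same-sign property encoded in $\Lambda_{2}$ is precisely what rules this out, and it must be invoked in this step (this is the content of the BKS-type argument for the sum), not where you placed it.

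Conversely, the place where you do invoke $\Lambda_{2}$ --- the maximum coordinates --- does not need it. Each of $\Phi^{(u)}_{2}(\eta)$ and $\Phi^{(u)}_{3}(\eta)$ is a nondecreasing step function by construction (running maxima are monotone whatever the signs of colliding atoms), and for monotone functions $M_{1}$ convergence is equivalent to pointwise convergence on a dense subset of $[0,1]$ containing $0$ and $1$; pointwise convergence at continuity points of the limit follows from convergence of atom locations alone. Moreover, your worry about ``breaking $M_{1}$ convergence of the vector'' conflates the weak and strong topologies: the lemma concerns the \emph{weak} $M_{1}$ topology, which by \eqref{e:defdpM1} is the product topology, so only coordinatewise convergence is required and no compatibility between simultaneous jumps in different coordinates ever has to be checked (that would be an issue only for the strong topology, which is not claimed). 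The repair is to transfer the $\Lambda_{2}$ hypothesis into your treatment of the first coordinate, where without it the argument breaks down.
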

\begin{proof}
Take an arbitrary $\eta \in \Lambda$ and suppose that $\eta_{n} \vto \eta$ as $n \to \infty$ in $\mathbf{M}_p([0,1] \times
\EE)$. We need to show that
$\Phi^{(u)}(\eta_n) \to \Phi^{(u)}(\eta)$ in $D^{1} \times D^{2}_{\uparrow}$ according to the weak $M_1$ topology. By
Theorem 12.5.2 in Whitt~\cite{Whitt02}, it suffices to prove that,
as $n \to \infty$,
$$ d_{p}^{M_{1}}(\Phi^{(u)}(\eta_{n}), \Phi^{(u)}(\eta)) =
\max_{k=1, 2, 3}d_{M_{1}}(\Phi^{(u)}_{k}(\eta_{n}),
\Phi^{(u)}_{k}(\eta)) \to 0.$$
Following, with small modifications, the lines in the proof of Lemma~3.2 in Basrak et al.~\cite{BKS} we obtain
$d_{M_{1}}(\Phi^{(u)}_{1}(\eta_{n}), \Phi^{(u)}_{1}(\eta)) \to 0$ as
$n \to \infty$. Since Proposition 3.1 in Krizmani\'{c}~\cite{Kr22-2} implies
$$\max_{k=2,3}d_{M_{1}}(\Phi^{(u)}_{k}(\eta_{n}),
\Phi^{(u)}_{k}(\eta)) \to 0 \qquad \textrm{as} \ n \to \infty,$$
we conclude that $\Phi^{(u)}$ is continuous at $\eta$.
\end{proof}

In the next result we show that a particular stochastic process $\widetilde{L}_{n}$, constructed carefully from the sequence $(Z_{i})$, converges to the limiting process in relation (\ref{eq:fconvVnMn}) in $D^{1} \times D^{1}_{\uparrow}$ with the weak $M_{1}$ topology. Later, as our main result, we will show that the $M_{2}$ distance between processes $L_{n}$ and $\widetilde{L}_{n}$ is asymptotically negligible (as $n$ tends to infinity), which will imply the desired convergence in (\ref{eq:fconvVnMn}).

\begin{prop}\label{p:FLT}
 Let $(Z_{i})_{i \in \mathbb{Z}}$ be a strictly stationary and strongly mixing sequence of regularly varying random variables satisfying $(\ref{e:regvar})$ and $(\ref{eq:pq})$ with $\alpha \in (0,2)$, such that the local dependence condition $D'$ and conditions $(\ref{e:oceknula})$ and $(\ref{e:sim})$ hold. If $\alpha \in [1,2)$, also suppose that condition $(\ref{e:vsvcond})$ holds. Let
 $(C_{i})_{i \geq 0 }$ be a sequence of random variables independent of $(Z_{i})$ such that the series defying the linear process
 $$ X_{i} = \sum_{j=0}^{\infty}C_{j}Z_{i-j}, \qquad i \in \mathbb{Z},$$
  is a.s.~convergent.
Let $$ \widetilde{V}_{n}(t) :=  \sum_{i=1}^{\floor {nt}}\frac{CZ_{i}}{a_{n}} \quad \textrm{and} \quad \widetilde{M}_{n}(t):= \bigvee_{i=1}^{\floor {nt}}\frac{ |Z_{i}|}{a_{n}}(C_{+}1_{\{ Z_{i} >0 \}} + C_{-} 1_{\{ Z_{i}<0 \}}), \qquad t \in [0,1],$$
with $C=\sum_{i=0}^{\infty}C_{i}$, and $C_{+}$ and $C_{-}$ defined in $(\ref{e:Cplusminus})$.
Then, as $n \to \infty$,
\begin{equation*}\label{e:pomkonv}
\widetilde{L}_{n}(\,\cdot\,) := (\widetilde{V}_{n}(\,\cdot\,), \widetilde{M}_{n}(\,\cdot\,)) \dto  (C^{(0)}V(\,\cdot\,), C^{(1)}M^{(1)}(\,\cdot\,) \vee C^{(2)}M^{(2)}(\,\cdot\,))
\end{equation*}
 in $D^{1} \times D^{1}_{\uparrow}$ with the weak $M_{1}$ topology,
where $V$ is an $\alpha$--stable L\'{e}vy process with characteristic triple $(0, \mu, b)$, with $\mu$ as in $(\ref{eq:mu})$,
$$ b = \left\{ \begin{array}{cc}
                                   0, & \quad \alpha = 1,\\[0.4em]
                                   (p-r)\frac{\alpha}{1-\alpha}, & \quad \alpha \in (0,1) \cup (1,2),
                                 \end{array}\right.$$
$M^{(1)}$ and $M^{(2)}$ are extremal processes with exponent measures
$p \alpha x^{-\alpha-1}1_{(0,\infty)}(x)\,dx$ and $r \alpha x^{-\alpha-1}1_{(0,\infty)}(x)\,dx$ respectively, with $p$ and $r$ defined in $(\ref{eq:pq})$, and $(C^{(0)}, C^{(1)}, C^{(2)})$ is a random vector, independent of $(V, M^{(1)}, M^{(2)})$, such that
$(C^{(0)}, C^{(1)}, C^{(2)}) \eind (C, C_{+}, C_{-})$.
\end{prop}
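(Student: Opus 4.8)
The plan is to deduce the joint convergence from the time--space point process limit $(\ref{e:BaTa1})$ by applying the continuous mapping theorem to the sum--maximum functional $\Phi^{(u)}$, then to attach the random coefficients using the independence of $(C_{i})$ and $(Z_{i})$, and finally to remove the truncation by letting $u\downarrow0$ in a convergence-together argument. First I would record that, because $C_{+},C_{-}\geq0$ are fixed (random) scalars, the maximum process factors as $\widetilde{M}_{n}=C_{+}M^{(1)}_{n}\vee C_{-}M^{(2)}_{n}$, where $M^{(1)}_{n}(t)=\bigvee_{i=1}^{\floor{nt}}(|Z_{i}|/a_{n})1_{\{Z_{i}>0\}}$ and $M^{(2)}_{n}(t)=\bigvee_{i=1}^{\floor{nt}}(|Z_{i}|/a_{n})1_{\{Z_{i}<0\}}$, while $\widetilde{V}_{n}(t)=C\,a_{n}^{-1}\sum_{i=1}^{\floor{nt}}Z_{i}$. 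Fixing $u>0$, the three coordinates of $\Phi^{(u)}(N_{n})$ are exactly the truncated sum $S^{(u)}_{n}(t)=\sum_{i=1}^{\floor{nt}}(Z_{i}/a_{n})1_{\{|Z_{i}|/a_{n}>u\}}$ together with $M^{(1)}_{n}$ and $M^{(2)}_{n}$. Since the limiting point process $N$ of $(\ref{e:BaTa1})$ is Poisson with intensity $Leb\times\mu$, it a.s.\ has no atoms on $\{0,1\}\times\EE$ nor on $[0,1]\times\{\pm\infty,\pm u\}$, and its points a.s.\ have distinct time coordinates, so $N\in\Lambda=\Lambda_{1}\cap\Lambda_{2}$ a.s.; by Lemma~\ref{l:contfunct} and the continuous mapping theorem, $\Phi^{(u)}(N_{n})\dto\Phi^{(u)}(N)=(V^{(u)},M^{(1)},M^{(2)})$ in $D^{1}\times D^{2}_{\uparrow}$ with the weak $M_{1}$ topology, where $V^{(u)}(t)=\sum_{T_{i}\leq t}P_{i}Q_{i}1_{\{P_{i}>u\}}$ and, after thinning the Poisson points according to the sign $Q_{i}$, $M^{(1)}$ and $M^{(2)}$ are the required extremal processes with exponent measures $p\alpha x^{-\alpha-1}$ and $r\alpha x^{-\alpha-1}$ on $(0,\infty)$.

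Next I would attach the coefficients. As $(C,C_{+},C_{-})$ is a fixed random vector independent of $(Z_{i})$, hence of $\Phi^{(u)}(N_{n})$, the pair converges jointly, the limit having $(C^{(0)},C^{(1)},C^{(2)})\eind(C,C_{+},C_{-})$ independent of the limiting processes. For $\alpha\in[1,2)$ I would first add to the first coordinate the deterministic centering $\floor{n\,\cdot\,}\,\E[(Z_{1}/a_{n})1_{\{|Z_{1}|/a_{n}\leq u\}}]$, which by Karamata's theorem converges uniformly to a linear function, via Lemma~\ref{l:weakM1transf}; this turns $V^{(u)}$ into a centered truncated stable process $V^{(u),c}$ (for $\alpha=1$ the centering vanishes by $(\ref{e:sim})$, for $\alpha\in(1,2)$ it uses $\E Z_{1}=0$ from $(\ref{e:oceknula})$). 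Then I would apply the map $\Psi(c_{0},c_{1},c_{2};v,m_{1},m_{2})=(c_{0}v,\,c_{1}m_{1}\vee c_{2}m_{2})$: multiplication of a c\`adl\`ag function by a fixed real constant is $M_{1}$-continuous (a parametric representation is scaled in its spatial component, cf.\ Lemma~\ref{l:contmultpl} with a constant factor), nonnegative scaling preserves monotonicity, and the maximum is continuous by Lemma~\ref{l:weakM2transf}; hence $\Psi$ is continuous at every point with $c_{1},c_{2}\geq0$ and $m_{1},m_{2}$ nondecreasing, a set of full measure under the limit law. The continuous mapping theorem then gives, for each fixed $u>0$, $(C\,S^{(u),c}_{n},\widetilde{M}_{n})\dto(C^{(0)}V^{(u),c},\,C^{(1)}M^{(1)}\vee C^{(2)}M^{(2)})$ in $D^{1}\times D^{1}_{\uparrow}$ with the weak $M_{1}$ topology, where $S^{(u),c}_{n}$ is the centered truncated sum and the second coordinate is precisely $\widetilde{M}_{n}$.

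Finally I would let $u\downarrow0$ via Billingsley's convergence-together theorem. On the limit side, $C^{(0)}V^{(u),c}\dto C^{(0)}V$, since the centered big-jump approximations of an $\alpha$-stable L\'evy process converge to it in the $J_{1}$ (hence $M_{1}$) topology, the drift $b$ of the triple $(0,\mu,b)$ being exactly the Karamata limit of the centerings; the maximum coordinate does not depend on $u$. On the prelimit side, the difference $\widetilde{V}_{n}-C\,S^{(u),c}_{n}$ equals $C$ times the centered sum of the small jumps, whose uniform norm satisfies $\lim_{u\downarrow0}\limsup_{n}\Pr(\sup_{t}|\,\cdot\,|>\varepsilon)=0$ by condition $(\ref{e:vsvcond})$ when $\alpha\in[1,2)$ and by a direct Karamata--Markov estimate when $\alpha\in(0,1)$; multiplying by the a.s.\ finite $C$ keeps this $M_{1}$-negligible. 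Combining these facts yields the asserted convergence of $\widetilde{L}_{n}$.

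I expect the main obstacle to be this last passage $u\downarrow0$: reconciling the centering with the drift $b$ uniformly across the three regimes $\alpha\in(0,1)$, $\alpha=1$, and $\alpha\in(1,2)$, and establishing the uniform small-jump negligibility jointly with the random scaling by $C$. By comparison, verifying $N\in\Lambda$ a.s.\ and the continuity of $\Psi$ at the relevant configurations are routine.
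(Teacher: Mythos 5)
Your proposal is correct and follows essentially the same route as the paper's proof: point-process convergence (\ref{e:BaTa1}) plus continuity of $\Phi^{(u)}$ (Lemma~\ref{l:contfunct}), centering via Lemma~\ref{l:weakM1transf}, identification of the limit through the It\^{o} representation and Poisson thinning, small-jump negligibility via condition (\ref{e:vsvcond}) and the Karamata--Markov estimate for $\alpha \in (0,1)$, attachment of the independent coefficients via Kallenberg's theorems, and the product/maximum continuous mappings (Lemmas~\ref{l:contmultpl} and~\ref{l:weakM2transf}). The only immaterial difference is the bookkeeping order: you attach the coefficients at fixed $u$ and pass $u \downarrow 0$ last (which then requires the routine truncation $\{|C| \le M\}$ to handle the random factor multiplying the small jumps), whereas the paper removes the truncation first, obtains convergence of the coefficient-free process $L_{n}^{*}$, and multiplies by the coefficients at the very end.
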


Before the proof of the proposition recall here some basic facts on L\'{e}vy processes and extremal processes. The distribution of a L\'{e}vy process $V$ is characterized by its
characteristic triple (i.e.~the characteristic triple of the infinitely divisible distribution
of $V(1)$). The characteristic function of $V(1)$ and the characteristic triple
$(a, \rho, c)$ are related in the following way:
 $$
  \mathrm{E} [e^{izV(1)}] = \exp \biggl( -\frac{1}{2}az^{2} + icz + \int_{\mathbb{R}} \bigl( e^{izx}-1-izx 1_{[-1,1]}(x) \bigr)\,\rho(\rmd x) \biggr)$$
for $z \in \mathbb{R}$, where $a \ge 0$, $c \in \mathbb{R}$ are constants, and $\rho$ is a measure on $\mathbb{R}$ satisfying
$$ \rho ( \{0\})=0 \qquad \text{and} \qquad \int_{\mathbb{R}}(|x|^{2} \wedge 1)\,\rho(\rmd x) < \infty.$$
 We refer to~\cite{Sa99} for a textbook treatment of
L\'{e}vy processes.
The distribution of an nonnegative extremal process $W$ is characterized by its exponent measure $\rho$ in the following way:
$$ \Pr (W(t) \leq x ) = e^{-t \rho(x,\infty)}$$
for $t>0$ and $x>0$, where $\rho$ is a measure on $(0,\infty)$ satisfying
$ \rho (\delta, \infty) < \infty$
for any $\delta >0$ (see Resnick~\cite{Resnick07}, page 161). If $\rho$ is a null measure, we suppose $W$ is a zero process.

\begin{proof}[Proof of Proposition~\ref{p:FLT}]
Take an arbitrary $0<u<1$, and consider
$$ \Phi^{(u)}(N_{n})(\,\cdot\,) = \bigg( \sum_{i/n \leq\,\cdot}\frac{Z_{i}}{a_{n}} 1_{ \big\{ \frac{|Z_{i}|}{a_{n}} > u \big\} }, \bigvee_{i/n \leq\,\cdot}\frac{Z_{i}}{a_{n}} 1_{ \{Z_{i} > 0 \} }, \bigvee_{i/n \leq\,\cdot}\frac{Z_{i}}{a_{n}} 1_{ \{Z_{i} < 0 \} } \bigg).$$
Since the limiting point process $N$ is a Poisson process, it is almost surely contained in the set $\Lambda$ (see Resnick~\cite{Resnick07}, page 221). Therefore, since by Lemma~\ref{l:contfunct} the sum-maximum functional $\Phi^{(u)}$ is continuous on the set $\Lambda$, the continuous mapping theorem applied to the convergence in (\ref{e:BaTa}) yields $\Phi^{(u)}(N_{n}) \dto \Phi^{(u)}(N)$ in $D^{1} \times D^{2}_{\uparrow}$ under the weak $M_{1}$ topology, i.e.
\begin{eqnarray}\label{e:mainconv0}
 \nonumber \bigg( \sum_{i = 1}^{\lfloor n \, \cdot \, \rfloor} \frac{Z_{i}}{a_{n}}
    1_{ \bigl\{ \frac{|Z_{i}|}{a_{n}} > u \bigr\} }, \bigvee_{i=1}^{\floor{n\,\cdot}} \frac{|Z_{i}|}{a_{n}} 1_{ \{ Z_{i} > 0 \} }, \bigvee_{i=1}^{\floor{n\,\cdot}}\frac{|Z_{i}|}{a_{n}} 1_{\{ Z_{i}<0 \}}  \bigg) &&\\[0.8em]
    & \hspace*{-48em} \dto & \hspace*{-24em} \bigg( \sum_{T_{i} \le \, \cdot} P_{i}Q_{i} 1_{\{ |P_{i}Q_{i}| > u \}}, \bigvee_{T_{i} \leq\,\cdot} |P_{i}Q_{i}| 1_{\{ P_{i}Q_{i} >0 \}}, \bigvee_{T_{i} \leq\,\cdot} |P_{i}Q_{i}| 1_{\{ P_{i}Q_{i}<0 \}} \bigg).
 \end{eqnarray}
 Since $P_{i}>0$ and $|Q_{i}|=1$ for all $i$, the limiting process in (\ref{e:mainconv0}) reduces to
 $$  \bigg( \sum_{T_{i} \le \, \cdot} P_{i}Q_{i} 1_{\{ P_{i} > u \}}, \bigvee_{T_{i} \leq\,\cdot} P_{i} 1_{\{ Q_{i} >0 \}}, \bigvee_{T_{i} \leq\,\cdot} P_{i} 1_{\{ Q_{i}<0 \}} \bigg).$$
Relation (\ref{eq:onedimregvar}) implies, as $n \to \infty$,
\begin{eqnarray}\label{e:conv12}
% \nonumber to remove numbering (before each equation)
 \nonumber \floor{nt} \mathrm{E} \Big( \frac{Z_{1}}{a_{n}} 1_{ \big\{ u < \frac{|Z_{1}|}{a_{n}} \leq 1 \big\} } \Big) &=& \frac{\floor{nt}}{n} \int_{u < |x| \leq 1} x\,n\Pr \Big( \frac{Z_{1}}{a_{n}} \in \rmd x \Big) \\[0.6em]
   & \to &  t \int_{u < |x| \leq 1} x\mu(\rmd x)
\end{eqnarray}
for every $t \in [0,1]$, and this convergence is uniform in $t$.
Therefore an application of Lemma~\ref{l:weakM1transf} to (\ref{e:mainconv0}) and (\ref{e:conv12}) yields, as $n \to \infty$,
\begin{multline}\label{e:mainconv}
      L_{n}^{(u)}(\,\cdot\,) := \bigg( \sum_{i = 1}^{\lfloor n \, \cdot \, \rfloor} \frac{Z_{i}}{a_{n}}
    1_{ \bigl\{ \frac{|Z_{i}|}{a_{n}} > u \bigr\} } - \floor{n\,\cdot\,}b_{n}^{(u)}, \bigvee_{i=1}^{\floor{n\,\cdot}} \frac{|Z_{i}|}{a_{n}} 1_{ \{ Z_{i} > 0 \} }, \bigvee_{i=1}^{\floor{n\,\cdot}}\frac{|Z_{i}|}{a_{n}} 1_{\{ Z_{i}<0 \}}  \bigg) \\
    \dto L^{(u)}(\,\cdot\,) :=  \bigg( \sum_{T_{i} \le \, \cdot} P_{i}Q_{i} 1_{\{ P_{i} > u \}} - (\,\cdot\,) b^{(u)}, \bigvee_{T_{i} \leq\,\cdot} P_{i} 1_{\{ Q_{i} >0 \}}, \bigvee_{T_{i} \leq\,\cdot} P_{i} 1_{\{ Q_{i}<0 \}} \bigg)
\end{multline}
in $D^{1} \times D^{2}_{\uparrow}$ under the weak $M_{1}$ topology, where
$$ b_{n}^{(u)} = \mathrm{E} \Big( \frac{Z_{1}}{a_{n}} 1_{ \big\{ u < \frac{|Z_{1}|}{a_{n}} \leq 1 \big\} } \Big) \qquad \textrm{and} \qquad  b^{(u)} = \int_{u < |x| \leq 1} x\mu(\rmd x).$$
%$$  b_{n}^{(u)} = \left\{ \begin{array}{cc}
%                                   0, & \quad \alpha \in (0,1),\\[0.5em]
%                          \mathrm{E} \Big( \frac{Z_{1}}{a_{n}} 1_{ \big\{ u < \frac{|Z_{1}|}{a_{n}} \leq 1 \big\} } \Big), & \quad \alpha \in [1,2),
%                                 end{array}\right.$$
%                                 and
%$$ b^{(u)} = \left\{ \begin{array}{cc}
%                                  0, & \quad \alpha \in (0,1),\\[0.6em]
%                                  \int_{u < |x| \leq 1} x\mu(\rmd x), & \quad \alpha \in [1,2).
%                                 \end{array}\right.$$
%$$  b_{n}^{(0)} = \left\{ \begin{array}{cc}
%                                   0, & \quad \alpha \in (0,1),\\[0.5em]
%                            \mathrm{E} \Big( \frac{Z_{1}}{a_{n}} 1_{ \big\{ \frac{|Z_{1}|}{a_{n}} \leq 1 \big\} } \Big), & \quad \alpha \in [1,2).
%                                 \end{array}\right.$$

Since $N=\sum_{i} \delta_{(T_{i}, P_{i}Q_{i} })$
 is a Poisson process with intensity measure
$ Leb \times \mu$, by the It\^{o} representation of a L\'{e}vy process (see Resnick~\cite{Resnick07}, pages 150--157; and Sato~\cite{Sa99}, Theorem 14.3 and Theorem 19.2), there exists an $\alpha$--stable L\'{e}vy process $V^{(0)}(\,\cdot\,)$ with characteristic triple $(0, \mu, 0)$ such that
$$ \sup_{t \in [0,1]} |L^{(u)1}(t)-V^{(0)}(t)| \to 0$$
almost surely as $u \to 0$, where $L^{(u)1}$ is the first component of the process $L^{(u)}$. Since uniform convergence implies Skorohod $M_{1}$ convergence, we get
\begin{equation}\label{e:dm1}
d_{M_{1}} ( L^{(u)1}, V ) \to 0
\end{equation}
almost surely as $u \to 0$.
Let
$$L^{(0)}(\,\cdot\,) := \bigg( V^{(0)}(\,\cdot\,), \bigvee_{T_{i} \leq\,\cdot} P_{i} 1_{\{ Q_{i} >0 \}}, \bigvee_{T_{i} \leq\,\cdot} P_{i} 1_{\{ Q_{i}<0 \}} \bigg).$$
Then from (\ref{e:dm1}) we obtain
\begin{equation*}
d_{p}^{M_{1}}(L^{(u)}, L^{(0)}) \to 0
\end{equation*}
almost surely as $u \to 0$. Since almost sure convergence implies convergence in distribution, we have, as $u \to 0$,
\begin{equation}\label{e:mainconv2}
 L^{(u)}(\,\cdot\,) \dto L^{(0)}(\,\cdot\,)
\end{equation}
in $D^{1} \times D^{2}_{\uparrow}$ endowed with the weak $M_{1}$ topology.
By Proposition 5.2 and Proposition 5.3 in Resnick~\cite{Resnick07}, the process
$$\sum_{i} \delta_{(T_{i}, P_{i} 1_{\{ Q_{1}>0 \}} ) }$$
 is a Poisson process with intensity measure
$ Leb \times \nu_{1}$, where
$$ \nu_{1}(\rmd x) = \mathrm{E}[(1_{\{ Q_{1}>0 \}})^{\alpha}] \alpha x^{-\alpha-1} 1_{(0,\infty)}(x)\, \rmd x = p \alpha x^{-\alpha-1} 1_{(0,\infty)}(x)\, \rmd x.$$
Therefore the process
$$  M^{(1)}(\,\cdot\,) := \bigvee_{T_{i} \leq\,\cdot} P_{i} 1_{\{ Q_{i}>0 \}}$$
is an extremal process with exponent measure $\nu_{1}$ (see Resnick~\cite{Resnick07}, page 161).
Similarly, the process
$$\sum_{i} \delta_{(T_{i}, P_{i} 1_{\{ Q_{1}<0 \}} )}$$
 is a Poisson process with intensity measure
$ Leb \times \nu_{2}$, where
$$ \nu_{2}(\rmd x) = \mathrm{E}[(1_{\{ Q_{1}<0 \}})^{\alpha}] \alpha x^{-\alpha-1} 1_{(0,\infty)}(x)\, \rmd x = r \alpha x^{-\alpha-1} 1_{(0,\infty)}(x)\, \rmd x,$$
and thus the process
$$ M^{(2)}(\,\cdot\,) := \bigvee_{T_{i} \leq\,\cdot} P_{i} 1_{\{ Q_{i}<0 \}}$$
is an extremal process with exponent measure $\nu_{2}$.

Let
$$   L_{n}^{(0)}(\,\cdot\,) := \bigg( \sum_{i = 1}^{\lfloor n \, \cdot \, \rfloor} \frac{Z_{i}}{a_{n}}
     - \floor{n\,\cdot\,} \mathrm{E} \Big( \frac{Z_{1}}{a_{n}} 1_{ \big\{ \frac{|Z_{1}|}{a_{n}} \leq 1 \big\} } \Big) , \bigvee_{i=1}^{\floor{n\,\cdot}} \frac{|Z_{i}|}{a_{n}} 1_{ \{ Z_{i} > 0 \} }, \bigvee_{i=1}^{\floor{n\,\cdot}}\frac{|Z_{i}|}{a_{n}} 1_{\{ Z_{i}<0 \}}  \bigg).
    $$
%where
%$$  b_{n}^{(0)} = \mathrm{E} \Big( \frac{Z_{1}}{a_{n}} 1_{ \big\{ \frac{|Z_{1}|}{a_{n}} \leq 1 \big\} } \Big).$$
If we show that
$$ \lim_{u \to 0}\limsup_{n \to \infty} \Pr(d_{p}^{M_{1}}(L_{n}^{(0)},L_{n}^{(u)}) > \epsilon)=0$$
for every $\epsilon >0$, from (\ref{e:mainconv}) and (\ref{e:mainconv2}) by a generalization of Slutsky's theorem (see Theorem 3.5 in Resnick~\cite{Resnick07}) it will follow that
$ L_{n}^{(0)} \dto L^{(0)}$ as $n \to \infty$,
in $D^{1} \times D^{2}_{\uparrow}$ with the weak $M_{1}$ topology.
Recalling the definitions, the fact that the metric $d_{p}^{M_{1}}$ is bounded above by the uniform metric (see Theorem 12.10.3 in Whitt~\cite{Whitt02}), and using stationarity and Markov's inequality we obtain
 \begin{eqnarray}\label{e:slutsky}
   %\begin{split}
 % \nonumber to remove numbering (before each equation)
    \nonumber  \Pr ( d_{p}^{M_{1}}(L_{n}^{(0)}, L_{n}^{(u)}) > \epsilon ) & \leq & \Pr \bigg(
     \sup_{t \in [0,1]} \|L_{n}^{(0)}(t) - L_{n}^{(u)}(t)\| >  \epsilon \bigg) \\[0.3em]
   \nonumber & \hspace*{-16em} = & \ \hspace*{-8em} \Pr \bigg[
       \sup_{t \in [0,1]}  \bigg| \sum_{i=1}^{\lfloor nt \rfloor} \frac{Z_{i}}{a_{n}}
       1_{ \big\{ \frac{|Z_{i}|}{a_{n}} \leq u \big\} } - \lfloor nt \, \rfloor \mathrm{E} \Big( \frac{Z_{1}}{a_{n}} 1_{ \big\{ \frac{|Z_{1}|}{a_{n}} \leq u \big\} } \Big) \bigg|  > \epsilon
       \bigg]\\[0.4em]
    \nonumber & \hspace*{-16em} = & \ \hspace*{-8em} \Pr \bigg[
       \max_{1 \leq k \leq n}  \bigg| \sum_{i=1}^{k} \bigg( \frac{Z_{i}}{a_{n}}
       1_{ \big\{ \frac{|Z_{i}|}{a_{n}} \leq u \big\} } - \mathrm{E} \Big( \frac{Z_{1}}{a_{n}} 1_{ \big\{ \frac{|Z_{1}|}{a_{n}} \leq u \big\} } \Big) \bigg) \bigg|  > \epsilon
       \bigg]
 \end{eqnarray}
Therefore we have to show
\begin{equation}\label{e:slutskycondition}
 \lim_{u \to 0}\limsup_{n \to \infty} \Pr \bigg[
       \max_{1 \leq k \leq n}  \bigg| \sum_{i=1}^{k} \bigg( \frac{Z_{i}}{a_{n}}
       1_{ \big\{ \frac{|Z_{i}|}{a_{n}} \leq u \big\} } - \mathrm{E} \Big( \frac{Z_{1}}{a_{n}} 1_{ \big\{ \frac{|Z_{1}|}{a_{n}} \leq u \big\} } \Big) \bigg) \bigg|  > \epsilon
       \bigg].
\end{equation}
  For $\alpha \in [1,2)$ this relation is simply condition (\ref{e:vsvcond}). In the case $\alpha \in (0,1)$, let
$$ I(u,n,\epsilon) = \Pr \bigg[
       \max_{1 \le k \le n}  \bigg| \sum_{i=1}^{k} \bigg( \frac{Z_{i}}{a_{n}} \,
       1_{ \big\{ \frac{|Z_{i}|}{a_{n}} \le u \big\} } -  \E \bigg( \frac{Z_{1}}{a_{n}} \,
       1_{ \big\{ \frac{|Z_{1}|}{a_{n}} \le u \big\} } \bigg) \bigg) \bigg| > \epsilon
       \bigg].$$
 Using stationarity and Chebyshev's inequality we get the bound
 \begin{eqnarray}\label{e:alpha01}
   \nonumber I(u,n,\epsilon) & \le & \epsilon^{-1} \E \bigg[ \sum_{i=1}^{n} \bigg| \frac{Z_{i}}{a_{n}} \,
       1_{ \big\{ \frac{|Z_{i}|}{a_{n}} \le u \big\} } -  \E \bigg( \frac{Z_{1}}{a_{n}} \,
       1_{ \big\{ \frac{|Z_{1}|}{a_{n}} \le u \big\} } \bigg) \bigg| \bigg]\\[0.5em]
     \nonumber    &\leq& \frac{2n}{\epsilon} \E \bigg( \frac{|Z_{1}|}{a_{n}} \, 1_{ \big\{ \frac{|Z_{1}|}{a_{n}}
            \le u \big\} } \bigg)\\[0.5em]
  \nonumber & = & \ \frac{2u}{\epsilon} \cdot n \Pr (|Z_{1}| > a_{n}) \cdot \frac{\Pr(|Z_{1}| > ua_{n})}{\Pr(|Z_{1}|>a_{n})} \cdot
          \frac{\mathrm{E}(|Z_{1}| 1_{ \{ |Z_{1}| \leq ua_{n} \} })}{ ua_{n} \Pr (|Z_{1}| >ua_{n})}.
%\end{split}
 \end{eqnarray}
 Since the random variable $Z_{1}$ is regularly varying with index $\alpha$, it holds that
  $$ \lim_{n \to \infty} \frac{\Pr(|Z_{1}| > ua_{n})}{\Pr(|Z_{1}|>a_{n})} = u^{-\alpha}.$$
  By Karamata's theorem
  $$ \lim_{n \to \infty} \frac{\mathrm{E}(|Z_{1}| 1_{ \{ |Z_{1}| \leq ua_{n} \} })}{ ua_{n} \Pr (|Z_{1}| >ua_{n})} = \frac{\alpha}{1-\alpha},$$
 and therefore taking into account relation (\ref{eq:niz}), we get
 $$ \limsup_{n \to \infty}  I(u,n,\epsilon) \leq u^{1-\alpha} \frac{\alpha}{\epsilon (1-\alpha)}.$$
 Since in this case $1-\alpha >0$, letting $u \to 0$ we obtain
 $$ \lim_{u \to 0}\limsup_{n \to \infty} I(u, n, \epsilon)=0.$$
 Hence we conclude
% $$ \lim_{u \to 0}\limsup_{n \to \infty} \Pr(d_{p}^{M_{1}}(L_{n}^{(0)},L_{n}^{(u)}) > \epsilon)=0.$$
 \begin{equation}\label{e:weakM1L0}
  L_{n}^{(0)}(\,\cdot\,) \dto L^{(0)}(\,\cdot\,) \qquad \textrm{as} \ n \to \infty,
  \end{equation}
in $D^{1} \times D^{2}_{\uparrow}$ with the weak $M_{1}$ topology.
% \item[(ii)] $\alpha \in [1,2)$: Under $\beta$--mixing, Lemma 6.2 and Lemma 6.4 in Basrak et al.~\cite{BaPlSo} imply the existence of an %$\alpha$--stable L\'{e}vy process $V$ such that, as $u \to 0$ (along some subsequence)
%$$ \sum_{T_{i} \le \, \cdot} P_{i}Q_{i} 1_{\{ P_{i} > u \}} - (\,\cdot\,) b^{(u)} \to V(\,\cdot\,)$$
%uniformly almost surely. :::::::::::: rec u napomeni da kada je alpha manji od 1 mozemo uzeti strong mixing umjesti beta mixinga....
%\end{itemize}

By Karamata's theorem, as $n \to \infty$,
\begin{eqnarray*}
% \nonumber to remove numbering (before each equation)
  n\,\mathrm{E} \Big( \frac{Z_{1}}{a_{n}} 1_{\{|Z_{1}| \leq a_{n} \}} \Big) \to (p-r)\frac{\alpha}{1-\alpha}, && \textrm{if}  \ \ \alpha \in (0,1),\\[0.5em]
  n\,\mathrm{E} \Big( \frac{Z_{1}}{a_{n}} 1_{\{ |Z_{1}| > a_{n} \}} \Big) \to (p-r)\frac{\alpha}{\alpha-1}, && \textrm{if} \ \ \alpha \in (1,2),
\end{eqnarray*}
with $p$ and $r$ as in (\ref{eq:pq}). Therefore conditions (\ref{e:oceknula}) and (\ref{e:sim}), and Lemma~\ref{l:weakM1transf}, applied to the convergence in (\ref{e:weakM1L0}), yield
\begin{equation}\label{eq:weakconvLn*}
 L_{n}^{*}(\,\cdot\,) \dto L(\,\cdot\,) \qquad \textrm{as} \  n \to \infty
\end{equation}
in $D^{1} \times D^{2}_{\uparrow}$ with the weak $M_{1}$ topology, where
$$L_{n}^{*}(\,\cdot\,) := \bigg( \sum_{i = 1}^{\lfloor n \, \cdot \, \rfloor} \frac{Z_{i}}{a_{n}}, \bigvee_{i=1}^{\floor{n\,\cdot}} \frac{|Z_{i}|}{a_{n}} 1_{ \{ Z_{i} > 0 \} }, \bigvee_{i=1}^{\floor{n\,\cdot}}\frac{|Z_{i}|}{a_{n}} 1_{\{ Z_{i}<0 \}}  \bigg)
    $$
and $L:=(V, M^{(1)}, M^{(2)})$, with
$$V(t)= \left\{ \begin{array}{cc}
                                   V^{(0)}(t), & \quad \alpha = 1,\\[0.4em]
                                   V^{(0)}(t) + (p-r)\frac{\alpha}{1-\alpha}, & \quad \alpha \in (0,1) \cup (1,2),
                                 \end{array}\right.$$
being an $\alpha$--stable L\'{e}vy process
 with characteristic triple
$(0,\mu,0)$ if $\alpha=1$ and $(0,\mu,(p-r)\alpha/(1-\alpha))$ if $\alpha \in (0,1) \cup (1,2)$. Note that in case $\alpha=1$ it holds that $L_{n}^{*}=L_{n}^{(0)}$ (since $Z_{1}$ is symmetric).

 It is known that $D^{1}$ equipped with the Skorokhod $J_{1}$ topology is a Polish space, i.e.~metrizable as a complete separable metric space (see Billingsley~\cite{Bi68}, Section 14), and therefore the same holds for the $M_{1}$ topology, since it is topologically complete (see Whitte~\cite{Whitt02}, Section 12.8) and separability remains preserved in the weaker topology. Since the space $D^{1}_{\uparrow}$ is a closed subspace of $D^{1}$ (cf.~Whitt~\cite{Whitt02}, Lemma 13.2.3), it is also Polish. Further, the space $D^{1} \times D^{2}_{\uparrow}$ equipped with the weak $M_{1}$ topology is separable as a direct product of three separable topological spaces. It is also topologically complete since the product metric $d_{p}^{M_{1}}$ inherits the completeness of the component metrics. Hence $D^{1} \times D^{2}_{\uparrow}$ with the weak $M_{1}$ topology is also a Polish space. This allows us to apply Corollary 5.18 in Kallenberg~\cite{Ka97} to conclude that there exists a random vector ($C^{(0)}, C^{(1)}, C^{(2)})$, independent of $(V, M^{(1)}, M^{(2)})$, such that
\begin{equation}\label{e:eqdistrC}
(C^{(0)}, C^{(1)}, C^{(2)}) \eind (C, C_{+}, C_{-}).
\end{equation}
This, relation (\ref{eq:weakconvLn*}), the fact that $(C, C_{+}, C_{-})$ is independent of $L_{n}^{*}$, and Theorem 3.29 in Kallenberg~\cite{Ka97}, imply that, as $n \to \infty$,
  \begin{equation}\label{e:zajedkonvK1}
   (B^{0}, B^{1}, B^{2}, L_{n}^{*1}, L_{n}^{*2}, L_{n}^{*3}) \dto (B^{(0)}, B^{(1)}, B^{(2)}, V, M^{(1)}, M^{(2)})
  \end{equation}
  in $D^{1} \times D^{2}_{\uparrow} \times D^{1} \times D^{2}_{\uparrow}$ with the product $M_{1}$ topology, where $L_{n}^{*i}$ is the $i$--th component of $L_{n}^{*}$ $(i=1,2,3)$, $B^{0}(t)=C$, $B^{1}(t)=C_{+}$, $B^{2}(t)=C_{-}$, $B^{(0)}(t)=C^{(0)}$, $B^{(1)}(t)=C^{(1)}$ and $B^{(2)}(t)=C^{(2)}$ for $t \in [0,1]$.

Let $g \colon D^{1} \times D^{2}_{\uparrow} \times D^{1} \times D^{2}_{\uparrow} \to D^{1} \times D^{2}_{\uparrow}$ be a function defined by
$$ g(x) = (x_{1}  x_{4}, x_{2}  x_{5}, x_{3}  x_{6}), \qquad x=(x_{1}, \ldots, x_{6}) \in D^{1} \times D^{2}_{\uparrow} \times D^{1} \times D^{2}_{\uparrow}.$$
 Denote by $\widetilde{D}_{1,2,3}$ the set of all functions in $D^{1} \times D^{2}_{\uparrow} \times D^{1} \times D^{2}_{\uparrow}$ for which the first three component functions have no discontinuity points, that is
$$ \widetilde{D}_{1,2,3} = \{ (x_{1}, \ldots, x_{6}) \in D^{1} \times D^{2}_{\uparrow} \times D^{1} \times D^{2}_{\uparrow} : \textrm{Disc}(x_{i}) = \emptyset, \ i=1,2,3 \}.$$
By Lemma~\ref{l:contmultpl} the function $g$ is continuous on the set $\widetilde{D}_{1,2,3}$ in the weak $M_{1}$ topology, and hence $\textrm{Disc}(g) \subseteq \widetilde{D}_{1,2,3}^{c}$. Denoting
$\widetilde{D}_{1} = \{ u \in D^{1}_{\uparrow} : \textrm{Disc}(u)= \emptyset \}$ we obtain
\begin{eqnarray*}
\Pr[ (B^{(0)}, B^{(1)}, B^{(2)}, V, M^{(1)}, M^{(2)}) \in \textrm{Disc}(g) ] & & \\[0.6em]
& \hspace*{-26em} \leq &  \hspace*{-13em} \Pr [ (B^{(0)}, B^{(1)}, B^{(2)}, V, M^{(1)}, M^{(2)})  \in \widetilde{D}_{1,2,3}^{c}]\\[0.6em]
 &  \hspace*{-26em} \leq & \hspace*{-13em} \Pr [ \{ B^{(0)} \in \widetilde{D}_{1}^{c} \} \cup \{ B^{(1)} \in \widetilde{D}_{1}^{c} \} \cup \{ B^{(2)} \in \widetilde{D}_{1}^{c} \}]=0,
 \end{eqnarray*}
where the last equality holds since $B^{(0)}$, $B^{(1)}$ and $B^{(2)}$ have no discontinuity points. This allows us to apply the continuous mapping theorem to relation (\ref{e:zajedkonvK1}) yielding
$g(B^{0}, B^{1}, B^{2}, L_{n}^{*1}, L_{n}^{*2}, L_{n}^{*3}) \dto g(B^{(0)}, B^{(1)}, B^{(2)}, V, M^{(1)}, M^{(2)})$ as $n \to \infty$, that is
\begin{eqnarray}\label{e:zajedkonvK2}
 \nonumber \bigg( \sum_{i = 1}^{\lfloor n \, \cdot \, \rfloor} \frac{CZ_{i}}{a_{n}}, \bigvee_{i=1}^{\floor{n\,\cdot}} \frac{C_{+}|Z_{i}|}{a_{n}} 1_{ \{ Z_{i} > 0 \} }, \bigvee_{i=1}^{\floor{n\,\cdot}}\frac{C_{-}|Z_{i}|}{a_{n}} 1_{\{ Z_{i}<0 \}}  \bigg) &&\\[0.8em]
 &\hspace*{-32em} \dto & \hspace*{-16em} (C^{(0)}V(\,\cdot\,), C^{(1)} M^{(1)}(\,\cdot\,), C^{(2)} M^{(2)}(\,\cdot\,))
\end{eqnarray}
 in $D^{1} \times D^{2}_{\uparrow}$ with the weak $M_{1}$ topology.

 Lemma~\ref{l:weakM2transf} implies the function $f \colon D^{1} \times D^{2}_{\uparrow} \to D^{1} \times D^{1}_{\uparrow}$, defined by
 $$f(x,y,z) = (x, y \vee z), \qquad (x,y,z) \in D^{1} \times D^{2}_{\uparrow},$$
 is continuous when $D^{1} \times D^{2}_{\uparrow}$ and $D^{1} \times D^{1}_{\uparrow}$ are endowed with the weak $M_{1}$ topology. Therefore
 from (\ref{e:zajedkonvK2}), by an application of the continuous mapping theorem, we obtain, as $n \to \infty$,
\begin{eqnarray}\label{e:zajedkonvK3}
 \nonumber \bigg( \sum_{i = 1}^{\lfloor n \, \cdot \, \rfloor} \frac{CZ_{i}}{a_{n}}, \bigvee_{i=1}^{\floor{n\,\cdot}} \frac{C_{+}|Z_{i}|}{a_{n}} 1_{ \{ Z_{i} > 0 \} } \vee \bigvee_{i=1}^{\floor{n\,\cdot}}\frac{C_{-}|Z_{i}|}{a_{n}} 1_{\{ Z_{i}<0 \}}  \bigg) &&\\[0.8em]
 \nonumber &\hspace*{-32em} \dto & \hspace*{-16em} (C^{(0)}V(\,\cdot\,), C^{(1)} M^{(1)}(\,\cdot\,) \vee C^{(2)} M^{(2)}(\,\cdot\,)),
\end{eqnarray}
that is,
%\begin{equation*}
%\widetilde{L}_{n}(\,\cdot\,) \dto  (C^{(0)}V(\,\cdot\,), C^{(1)}M^{(1)}(\,\cdot\,) \vee C^{(2)}M^{(2)}(\,\cdot\,))
%\end{equation*}
$\widetilde{L}_{n} \dto  (C^{(0)}V, C^{(1)}M^{(1)} \vee C^{(2)}M^{(2)})$
in $D^{1} \times D^{1}_{\uparrow}$ with the weak $M_{1}$ topology, which completes the proof.
\end{proof}

\section{Finite order moving average processes}
\label{S:FiniteMA}

Fix $q \in \mathbb{N}$ and let $C_{0}, C_{1}, \ldots , C_{q}$ be random variables satisfying
\be\label{eq:FiniteMAcond}
0 \le \sum_{i=0}^{s}C_{i} \Bigg/ \sum_{i=0}^{q}C_{i} \le 1 \ \ \textrm{a.s.} \qquad \textrm{for every} \ s=0, 1, \ldots, q.
\ee
In this case $C$, $C_{+}$ and $C_{-}$ reduce to
$$C= \sum_{i=0}^{q}C_{i}, \quad C_{+}= \max_{0 \leq i \leq q} (C_{i} \vee 0) \quad \textrm{and} \quad C_{-}= \max_{0 \leq i \leq q}(-C_{i} \vee 0).$$
Condition (\ref{eq:FiniteMAcond}) implies that $C$,
$ \sum_{i=0}^{s}C_{i}$ and $ \sum_{i=s}^{q}C_{i}$ are a.s.~of the same sign for every $s=0,1,\ldots,q$. If the $C_{j}$'s are all nonnegative or all nonpositive, then condition (\ref{eq:FiniteMAcond}) is trivially satisfied. Recall that $L_{n}=(V_{n}, M_{n})$, where
\begin{equation*}
V_{n}(t) = \frac{1}{a_{n}}  \sum_{i=1}^{\floor {nt}}X_{i}, \qquad t \in [0,1],
\end{equation*}
and
\begin{equation*}
M_{n}(t) = \left\{ \begin{array}{cc}
                                   \displaystyle \frac{1}{a_{n}} \bigvee_{i=1}^{\floor {nt}}X_{i}, & \quad  \displaystyle t \in \Big[ \frac{1}{n}, 1 \Big],\\[1.4em]
                                   \displaystyle \frac{X_{1}}{a_{n}}, & \quad \displaystyle  t \in \Big[0,  \frac{1}{n} \Big\rangle,
                                 \end{array}\right.
\end{equation*}
are the partial sum and partial maxima processes constructed from moving average processes
$$ X_{i} = \sum_{j=0}^{q}C_{j}Z_{i-j}, \qquad i \in \mathbb{Z},$$
with the normalizing sequence $(a_n)$ as in~\eqref{eq:niz}.

\begin{thm}\label{t:FinMA}
Let $(Z_{i})_{i \in \mathbb{Z}}$ be a strictly stationary and strongly mixing sequence of regularly varying random variables satisfying $(\ref{e:regvar})$ and $(\ref{eq:pq})$ with $\alpha \in (0,2)$, such that the local dependence condition $D'$ and conditions $(\ref{e:oceknula})$ and $(\ref{e:sim})$ hold. If $\alpha \in [1,2)$, also suppose that condition $(\ref{e:vsvcond})$ holds. Let
 $C_{0}, \ldots, C_{q}$ be random variables, independent of $(Z_{i})$, such that condition $(\ref{eq:FiniteMAcond})$ holds.
Then, as $n \to \infty$,
\begin{equation*}\label{e:mainconvFinMA}
L_{n}(\,\cdot\,) \dto  (C^{(0)}V(\,\cdot\,), C^{(1)}M^{(1)}(\,\cdot\,) \vee C^{(2)}M^{(2)}(\,\cdot\,))
\end{equation*}
 in $D^{1} \times D^{1}_{\uparrow}$ with the weak $M_{2}$ topology,
where $V$ is an $\alpha$--stable L\'{e}vy process with characteristic triple $(0, \mu, b)$, with $\mu$ as in $(\ref{eq:mu})$,
$$ b = \left\{ \begin{array}{cc}
                                   0, & \quad \alpha = 1,\\[0.4em]
                                   (p-r)\frac{\alpha}{1-\alpha}, & \quad \alpha \in (0,1) \cup (1,2),
                                 \end{array}\right.$$
$M^{(1)}$ and $M^{(2)}$ are extremal processes with exponent measures
$p \alpha x^{-\alpha-1}1_{(0,\infty)}(x)\,dx$ and $r \alpha x^{-\alpha-1}1_{(0,\infty)}(x)\,dx$ respectively, with $p$ and $r$ defined in $(\ref{eq:pq})$, and $(C^{(0)}, C^{(1)}, C^{(2)})$ is a random vector, independent of $(V, M^{(1)}, M^{(2)})$, such that
$(C^{(0)}, C^{(1)}, C^{(2)}) \eind (C, C_{+}, C_{-})$.
\end{thm}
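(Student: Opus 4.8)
The plan is to deduce the theorem from Proposition~\ref{p:FLT} by showing that the genuine processes $L_{n}=(V_{n},M_{n})$ and the auxiliary processes $\widetilde{L}_{n}=(\widetilde{V}_{n},\widetilde{M}_{n})$ are asymptotically indistinguishable in the weak $M_{2}$ topology. Since weak $M_{1}$ convergence implies weak $M_{2}$ convergence (the $M_{1}$ topology being stronger), Proposition~\ref{p:FLT} already gives
$$\widetilde{L}_{n}(\,\cdot\,) \dto (C^{(0)}V(\,\cdot\,), C^{(1)}M^{(1)}(\,\cdot\,) \vee C^{(2)}M^{(2)}(\,\cdot\,))$$
in $D^{1} \times D^{1}_{\uparrow}$ with the weak $M_{2}$ topology, with exactly the limit claimed in the theorem. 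Hence, by the generalization of Slutsky's theorem (Theorem 3.5 in Resnick~\cite{Resnick07}), it suffices to prove that
$$ d_{p}^{M_{2}}(L_{n}, \widetilde{L}_{n}) \stp 0 \qquad \textrm{as} \ n \to \infty.$$
Because $d_{p}^{M_{2}}$ is the maximum of the coordinatewise $M_{2}$ distances (see (\ref{e:defdp})), this reduces to the two separate estimates $d_{M_{2}}(V_{n}, \widetilde{V}_{n}) \stp 0$ and $d_{M_{2}}(M_{n}, \widetilde{M}_{n}) \stp 0$.

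To handle the randomness of the coefficients I would condition on $(C_{j})_{0 \le j \le q}$. As this sequence is independent of $(Z_{i})$ and condition (\ref{eq:FiniteMAcond}) holds a.s., for almost every realization $(c_{0}, \ldots, c_{q})$ the coefficients are deterministic constants satisfying $0 \le \sum_{i=0}^{s}c_{i} \big/ \sum_{i=0}^{q}c_{i} \le 1$ for all $s$. For such fixed coefficients each conditional probability $\Pr(d_{M_{2}}(\,\cdot\,,\,\cdot\,) > \epsilon \mid (C_{j}) = (c_{j}))$ tends to $0$ and is bounded by $1$, so bounded convergence removes the conditioning and yields the unconditional statements. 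It therefore remains to establish the two comparison estimates in the deterministic-coefficient case.

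For the sum component the difference $\sum_{i=1}^{k} X_{i} - C \sum_{i=1}^{k} Z_{i} = \sum_{j=0}^{q} C_{j} \big( \sum_{l=1-j}^{0} Z_{l} - \sum_{l=k-j+1}^{k} Z_{l} \big)$ consists of at most $q$ boundary terms; the term at the moving endpoint $k$ can be large whenever a big innovation lies in the window $\{k-q+1, \ldots, k\}$, which is precisely why the uniform distance between $V_{n}$ and $\widetilde{V}_{n}$ does not vanish and only the weaker $M_{2}$ distance can be controlled. After truncating the innovations at level $u a_{n}$, each large value $Z_{m}$ produces in $V_{n}$ a run of jumps over the shrinking interval $[m/n,(m+q)/n]$ with intermediate heights $\big(\sum_{j=0}^{s} C_{j}\big) Z_{m}/a_{n}$, while $\widetilde{V}_{n}$ has a single jump $C Z_{m}/a_{n}$ at $m/n$. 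Condition (\ref{eq:FiniteMAcond}) is exactly what forces these intermediate heights into the segment joining $0$ and $C Z_{m}/a_{n}$, so that the completed graphs stay Hausdorff-close and the induced $M_{2}$ distance is small; the anti-clustering consequence of $D'$ ensures distinct large values occupy disjoint windows, while the small-value part is controlled by the $\lim_{u \to 0}\limsup_{n \to \infty}$ argument based on (\ref{e:vsvcond}) and Karamata's theorem exactly as in the proof of Proposition~\ref{p:FLT}. This is the comparison carried out in the marginal functional limit theorem of Krizmani\'{c}~\cite{Kr22-1} (and~\cite{Kr19} in the i.i.d.~case), whose proof proceeds precisely through it.

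For the maximum component both $M_{n}$ and $\widetilde{M}_{n}$ are non-decreasing, so by Corollary 12.5.1 in Whitt~\cite{Whitt02} their $M_{2}$ (and $M_{1}$) closeness follows from pointwise closeness on a dense subset of $[0,1]$ containing $0$ and $1$. A large positive $Z_{m}$ makes the largest of $X_{m}, \ldots, X_{m+q}$ asymptotically equal to $C_{+} Z_{m}$, matching the jump of $\widetilde{M}_{n}$, and a large negative $Z_{m}$ contributes $C_{-}|Z_{m}|$, again matching $\widetilde{M}_{n}$; the cross terms are negligible by anti-clustering and the timing discrepancy is at most $q/n \to 0$. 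This estimate follows the lines of~\cite{Kr22-2}, the only place where the i.i.d.~assumption enters there being the isolation of extremes, which in the present setting is guaranteed by condition $D'$ and strong mixing. Combining the two estimates gives $d_{p}^{M_{2}}(L_{n}, \widetilde{L}_{n}) \stp 0$ and completes the proof. I expect the sum comparison of the previous paragraph to be the main obstacle: the sign-indefinite coefficients rule out $M_{1}$ convergence, and the delicate point is to verify, simultaneously for all large values present, that the excursions of $V_{n}$ remain inside the relevant segments, so that only the $M_{2}$ distance --- and not the $M_{1}$ distance --- can be made negligible.
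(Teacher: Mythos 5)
Your proposal is correct and follows essentially the same route as the paper's proof: Proposition~\ref{p:FLT} plus Slutsky's theorem, reduction via $d_{p}^{M_{2}}$ to the two coordinatewise estimates, with the sum comparison delegated to the proof of Theorem 2.1 in Krizmani\'{c}~\cite{Kr22-1} and the maximum comparison obtained by adapting the argument of Krizmani\'{c}~\cite{Kr22-2}, replacing the independence of the innovations (used there only to bound joint exceedance probabilities of pairs of large values) by condition $D'$. The only deviations are cosmetic: you condition on the coefficients $(C_{j})$ and invoke bounded convergence where the paper instead truncates on the event $\{C_{*} \leq M\}$ and lets $M \to \infty$, and you sketch the monotone-function comparison pointwise where the paper cites the explicit inclusion $\{d_{M_{2}}(\widetilde{M}_{n}, M_{n}) > \delta\} \subseteq H_{n,1} \cup H_{n,2} \cup H_{n,3}$ from~\cite{Kr22-2}; both substitutions are sound.
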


\begin{proof} %({\it Theorem~\ref{t:FinMA}})
By Proposition~\ref{p:FLT}, $\widetilde{L}_{n} = (\widetilde{V}_{n}, \widetilde{M}_{n}) \dto  (C^{(0)}V, C^{(1)}M^{(1)} \vee C^{(2)}M^{(2)})$ as $n \to \infty$, in $D^{1} \times D^{1}_{\uparrow}$ with the weak $M_{1}$ topology, where
$$ \widetilde{V}_{n}(t) =  \sum_{i=1}^{\floor {nt}}\frac{CZ_{i}}{a_{n}} \quad \textrm{and} \quad \widetilde{M}_{n}(t)= \bigvee_{i=1}^{\floor {nt}}\frac{ |Z_{i}|}{a_{n}}(C_{+}1_{\{ Z_{i} >0 \}} + C_{-} 1_{\{ Z_{i}<0 \}}), \qquad t \in [0,1].$$
 Since $M_{1}$ convergence implies $M_{2}$ convergence, we have
\begin{equation}\label{e:pomkonvFMA}
\widetilde{L}_{n}(\,\cdot\,) = (\widetilde{V}_{n}(\,\cdot\,), \widetilde{M}_{n}(\,\cdot\,)) \dto  (C^{(0)}V(\,\cdot\,), C^{(1)}M^{(1)}(\,\cdot\,) \vee C^{(2)}M^{(2)}(\,\cdot\,))
\end{equation}
 in $D^{1} \times D^{1}_{\uparrow}$ with the weak $M_{2}$ topology as well.

If we show that
$$ \lim_{n \to \infty} \Pr [ d_{p}^{M_{2}}(\widetilde{L}_{n}, L_{n}) > \delta ]=0$$
for any $\delta > 0$, then from (\ref{e:pomkonvFMA}) by an application of Slutsky's theorem (see Theorem 3.4 in Resnick~\cite{Resnick07}) it will follow that $L_{n} \dto (C^{(0)}V, C^{(1)}M^{(1)} \vee C^{(2)}M^{(2)})$ as $n \to \infty$, in $D^{1} \times D^{1}_{\uparrow}$ with the weak $M_{2}$ topology. By the definition of the metric $d_{p}^{M_{2}}$ in (\ref{e:defdp}) it suffices to show that
\begin{equation}\label{e:SlutskyV}
\lim_{n \to \infty} \Pr [ d_{M_{2}}(\widetilde{V}_{n}, V_{n}) > \delta ]=0
\end{equation}
and
\begin{equation}\label{e:SlutskyM}
\lim_{n \to \infty} \Pr [ d_{M_{2}}(\widetilde{M}_{n}, M_{n}) > \delta ]=0.
\end{equation}
Relation (\ref{e:SlutskyV}) was established in the proof of Theorem 2.1 in Krizmani\'{c}~\cite{Kr22-1}. As for relation (\ref{e:SlutskyM}), it holds in a special case when the innovations $Z_{i}$ are i.i.d.~(see the proof of Theorem 3.3 in Krizmani\'{c}~\cite{Kr22-2}). It remains to show that it also holds in the weak dependence case, i.e.~when the independence property is replaced by the local dependence condition $D'$ in (\ref{e:D'cond}). Krizmani\'{c}~\cite{Kr22-2} showed that
\begin{equation}\label{e:H123}
 \{d_{M_{2}}(\widetilde{M}_{n}, M_{n}) > \delta \} \subseteq H_{n, 1} \cup H_{n, 2} \cup H_{n, 3},
\end{equation}
where
\begin{eqnarray}
 %\nonumber H_{n, 1} & = & \bigg\{ \exists\,l \in \{-q, \ldots, 0\} \ \textrm{such that} \ \frac{ C|Z_{l}|}{a_{n}} > \frac{\delta}{4(q+1)} \bigg\},\\[0.4em]
 \nonumber H_{n, 1} & = & \bigg\{ \exists\,l \in \{-q,\ldots,q\} \cup \{n-q+1, \ldots, n\} \ \textrm{such that} \ \frac{ C_{*}|Z_{l}|}{a_{n}} > \frac{\delta}{4(q+1)} \bigg\},\\[0.2em]
  \nonumber H_{n, 2} & = & \bigg\{ \exists\,k \in \{1, \ldots, n\} \ \textrm{and} \ \exists\,l \in \{k-q,\ldots,k+q\} \setminus \{k\} \ \textrm{such that}\\[0.2em]
  \nonumber & & \ \frac{ C_{*}|Z_{k}|}{a_{n}} > \frac{\delta}{4(q+1)} \ \textrm{and} \  \frac{ C_{*}|Z_{l}|}{a_{n}} > \frac{\delta}{4(q+1)}  \bigg\},\\[0.2em]
  \nonumber H_{n, 3} & = & \bigg\{ \exists\,k \in \{1, \ldots, n\}, \ \exists\,j \in \{1,\ldots,n\} \setminus \{k,\ldots,k+q\}, \ \exists\,l_{1} \in \{0,\ldots,q\}\\[0.2em]
  \nonumber & & \textrm{and} \ \exists\,l \in \{0,\ldots,q\} \setminus \{l_{1}\} \ \textrm{such that} \ \frac{ C_{*}|Z_{k}|}{a_{n}} > \frac{\delta}{4(q+1)},\\[0.2em]
  \nonumber & & \ \frac{ C_{*}|Z_{j-l_{1}}|}{a_{n}} > \frac{\delta}{4(q+1)} \ \textrm{and} \  \frac{ C_{*}|Z_{j-l}|}{a_{n}} > \frac{\delta}{4(q+1)}  \bigg\},
\end{eqnarray}
with $C_{*}:= C_{+} \vee C_{-}$, and the independence property was used there only in establishing
$$ \lim_{n \to \infty} \Pr(H_{n,2})=0 \qquad \textrm{and} \qquad \lim_{n \to \infty} \Pr(H_{n,3})=0.$$
 Now we are going to show the last two limiting relations still hold under condition $D'$.
For an arbitrary $M>0$ it holds that
\begin{eqnarray}
 \nonumber \Pr(H_{n, 2} \cap \{C_{*} \leq M\}) &&\\[0.6em]
  \nonumber & \hspace*{-12em} = & \hspace*{-6em} \sum_{k=1}^{n} \sum_{\scriptsize \begin{array}{c}
                          l=k-q  \\[0.1em]
                          l \neq k
                        \end{array}}^{k+q} \Pr \bigg( \frac{C_{*}|Z_{k}|}{a_{n}} > \frac{\delta}{4(q+1)},\,\frac{C_{*}|Z_{l}|}{a_{n}} > \frac{\delta}{4(q+1)},\,C_{*} \leq M \bigg)\\[0.2em]
   \nonumber &  \hspace*{-12em} \leq & \hspace*{-6em} 2n \sum_{i=1}^{q}
    \Pr \bigg( \frac{|Z_{0}|}{a_{n}} > \frac{\delta}{4(q+1)M},\,\frac{|Z_{i}|}{a_{n}} > \frac{\delta}{4(q+1)M} \bigg)\\[0.5em]
  \nonumber &  \hspace*{-12em} \leq & \hspace*{-6em} 2n \sum_{i=1}^{\lfloor n/k \rfloor}
    \Pr \bigg( \frac{|Z_{0}|}{a_{n}} > \frac{\delta}{4(q+1)M},\,\frac{|Z_{i}|}{a_{n}} > \frac{\delta}{4(q+1)M} \bigg)
 \end{eqnarray}
 for all positive integers $k$ such that $k \leq n/q$. By letting $n \to \infty$, and then $k \to \infty$, we see that condition $D'$ yields that
 $\Pr(H_{n, 2} \cap \{ C \leq M \}) \to 0$ as $n \to \infty$. Hence
 $$ \limsup_{n \to \infty}  \Pr(H_{n, 2}) \leq \limsup_{n \to \infty} \Pr(H_{n, 2} \cap \{C_{*} > M\}) \leq \Pr (C_{*} > M),$$
and letting $M \to \infty$ we conclude
\begin{equation}\label{e:est5}
\lim_{n \to \infty} \Pr(H_{n, 2})=0.
\end{equation}
Note that
\begin{eqnarray}
  \nonumber H_{n, 3} & \subseteq & \bigg\{ \exists\,s \in \{1-q, \ldots, n\} \ \textrm{and} \ \exists\,s_{1} \in \{s-q,\ldots,s+q\} \setminus \{s\} \\[0.2em]
  \nonumber & & \textrm{such that} \ \frac{ C_{*}|Z_{s}|}{a_{n}} > \frac{\delta}{4(q+1)} \ \textrm{and} \  \frac{ C_{*}|Z_{s_{1}}|}{a_{n}} > \frac{\delta}{4(q+1)}  \bigg\},
\end{eqnarray}
which implies
\begin{eqnarray}
 \nonumber \Pr(H_{n, 3} \cap \{C_{*} \leq M\}) &&\\[0.6em]
  \nonumber & \hspace*{-12em} = & \hspace*{-6em} \sum_{s=1-q}^{n} \sum_{\scriptsize \begin{array}{c}
                          s_{1}=s-q  \\[0.1em]
                          s_{1} \neq s
                        \end{array}}^{s+q} \Pr \bigg( \frac{C_{*}|Z_{s}|}{a_{n}} > \frac{\delta}{4(q+1)},\,\frac{C_{*}|Z_{s_{1}}|}{a_{n}} > \frac{\delta}{4(q+1)},\,C_{*} \leq M \bigg)\\[0.2em]
   \nonumber &  \hspace*{-12em} \leq & \hspace*{-6em} 2[n-(1-q)+1] \sum_{i=1}^{q}
    \Pr \bigg( \frac{|Z_{0}|}{a_{n}} > \frac{\delta}{4(q+1)M},\,\frac{|Z_{i}|}{a_{n}} > \frac{\delta}{4(q+1)M} \bigg)\\[0.5em]
  \nonumber &  \hspace*{-12em} \leq & \hspace*{-6em} 4n \sum_{i=1}^{\lfloor n/k \rfloor}
    \Pr \bigg( \frac{|Z_{0}|}{a_{n}} > \frac{\delta}{4(q+1)M},\,\frac{|Z_{i}|}{a_{n}} > \frac{\delta}{4(q+1)M} \bigg)
 \end{eqnarray}
 for arbitrary $M>0$, $n \geq q$ and positive integers $k$ such that $k \leq n/q$. Hence, similar as in (\ref{e:est5}), we obtain
 \begin{equation}\label{e:est10}
\lim_{n \to \infty} \Pr(H_{n, 3})=0.
\end{equation}
Further, by stationarity and the regular variation property it holds that
$$\limsup_{n \to \infty} \Pr (H_{n,1} \cap \{ C_{*} \leq M \}) \leq (3q+1) \limsup_{n \to \infty} \Pr \bigg( \frac{|Z_{1}|}{a_{n}} > \frac{\delta}{4(q+1)M} \bigg)=0$$
for arbitrary $M>0$,
and this, as before, implies
\begin{equation}\label{e:est2}
\lim_{n \to \infty} \Pr(H_{n, 1})=0.
\end{equation}
Now, from (\ref{e:H123})--(\ref{e:est2}) we obtain (\ref{e:SlutskyM}), and therefore finally conclude that  $L_{n} \dto (C^{(0)}V, C^{(1)}M^{(1)} \vee C^{(2)}M^{(2)})$ as $n \to \infty$, in $D^{1} \times D^{1}_{\uparrow}$ with the weak $M_{2}$ topology.
\end{proof}

\begin{rem}\label{r:jointdepend1}
From the proof of Proposition~\ref{p:FLT} it follows that the components of the limiting process $(C^{(0)}V, C^{(1)}M^{(1)} \vee C^{(2)}M^{(2)})$ can be expressed as functionals of the limiting point process $N = \sum_{i}  \delta_{(T_{i}, P_{i}Q_{i})}$ from relation (\ref{e:BaTa1}), that is
$$ V(t) =  \lim_{u \to 0} \bigg( \sum_{T_{i} \le t} P_{i}Q_{i} 1_{\{ P_{i} > u \}} - t  \int_{u < |x| \leq 1} x\mu(\rmd x) \bigg) + (p-r)\frac{\alpha}{1-\alpha} 1_{\{\alpha \neq 1\}},$$
where the limit holds almost surely uniformly on $[0,1]$, and
$$  M^{(1)}(t) = \bigvee_{T_{i} \leq t} P_{i} 1_{\{ Q_{i}>0 \}}, \qquad  M^{(2)}(t) = \bigvee_{T_{i} \leq t} P_{i} 1_{\{ Q_{i}<0 \}}.$$
\end{rem}

\begin{rem}\label{r:M2M1}
Theorem~\ref{t:FinMA} establishes functional convergence of the joint stochastic process $L_{n}$ of partial sums and maxima in the space $D^{1} \times D^{1}_{\uparrow}$ endowed with the weak $M_{2}$ topology induced by the metric $d_{p}^{M_{2}}$ given in (\ref{e:defdp}). Since for the second coordinate of $L_{n}$, i.e.~the partial maxima process $M_{n}$, functional convergence holds also in the stronger $M_{1}$ topology, it is possible to obtain a sort of joint convergence of $L_{n}$ in the $M_{2}$ topology on the first coordinate and in the $M_{1}$ topology on the second coordinate.

Precisely, by Remark 12.8.1 in Whitt~\cite{Whitt02} the following metric is a complete metric topologically equivalent to $d_{M_{1}}$:
$$ {d_{M_{1}}^{*}}(x_{1}, x_{2}) = d_{M_{2}}(x_{1}, x_{2}) + \lambda (\widehat{\omega}(x_{1},\cdot), \widehat{\omega}(x_{2},\cdot)),$$
where $\lambda$ is the L\'{e}vy metric on a space of distributions
$$ \lambda (F_{1},F_{2}) = \inf \{ \epsilon >0 : F_{2}(x-\epsilon) - \epsilon \leq F_{1}(x) \leq F_{2}(x+\epsilon) + \epsilon \ \ \textrm{for all} \ x \}$$
and
$$ \widehat{\omega}(x,z) = \left\{ \begin{array}{cc}
                                   \omega(x,e^{z}), & \quad z<0,\\[0.4em]
                                   \omega(x,1), & \quad z \geq 0,
                                 \end{array}\right.$$
with
$ \omega (x,\rho) = \sup_{0 \leq t \leq 1} \omega (x, t, \rho)$ and
\begin{equation*}\label{e:oscillationf}
\omega(x,t,\rho) = \sup_{0 \vee (t-\rho) \leq t_{1} < t_{2} < t_{3} \leq (t+\rho) \wedge 1} ||x(t_{2}) - [x(t_{1}), x(t_{3})]||
\end{equation*}
where $\rho>0$ and $\|z-A\|$ denotes the distance between a point $z$ and a subset $A \subseteq \mathbb{R}$.
%Here $\|z-A\|$ denotes the distance between a point $z$ and a subset $A \subseteq \mathbb{R}$.
Since $\widetilde{M}_{n}$ and $M_{n}$ are nondecreasing, for $t_{1} < t_{2} < t_{3}$ it holds that
$ \|\widetilde{M}_{n}(t_{2}) - [\widetilde{M}_{n}(t_{1}), \widetilde{M}_{n}(t_{3})] \|=0$, which implies $\omega(\widetilde{M}_{n}, \rho)=0$ for all $\rho>0$, and similarly $\omega(M_{n}, \rho)=0$. Therefore $\lambda (\widetilde{M}_{n}, M_{n})=0$, and $d_{M_{1}}^{*}(\widetilde{M}_{n}, M_{n}) = d_{M_{2}}(\widetilde{M}_{n}, M_{n})$.

Now from (\ref{e:SlutskyM}) we obtain
$$ \lim_{n \to \infty} \Pr[d_{M_{1}}^{*}(\widetilde{M}_{n}, M_{n}) > \delta]=0$$
for any $\delta >0$,
which allows us to conclude that $L_{n}$ converges in distribution to $(C^{(0)}V, C^{(1)}M^{(1)} \vee C^{(2)}M^{(2)})$
in the topology induced by the metric
$$ {d_{p}}^{*}(x,y)= \max \{ d_{M_{2}}(x_{1},y_{1}), d_{M_{1}}^{*}(x_{2}, y_{2}) \}$$
 for $x=(x_{1}, x_{2}), y=(y_{1}, y_{2}) \in D^{2}$, that is, in the $M_{2}$ topology on the first coordinate and in the $M_{1}$ topology on the second coordinate.
\end{rem}

\begin{rem}
In the case of deterministic coefficients of the same sign, functional convergence of $L_{n}$ in Theorem~\ref{t:FinMA} can be obtained also by an application of Theorem 3.4 in Krizmani\'{c}~\cite{Kr20} with some appropriate modifications due to different centering and normalizing sequences used. In this case the convergence actually holds in the weak $M_{1}$ topology.
\end{rem}

\section{Infinite order moving average processes}
\label{S:InfiniteMA}

When dealing with infinite order moving averages the standard idea is to approximate them by a sequence of finite order moving averages, for which the weak convergence holds, and to show that the error of approximation is negligible in the limit. In our case, we will approximate them by finite order moving averages for which Theorem~\ref{t:FinMA} holds, and then we will show that the error of approximation is negligible with respect to the uniform metric.

%The techniques we will use require us to impose stronger conditions on the sequence of coefficients $(C_{i})$. Instead of condition %(\ref{eq:InfiniteMAcond}), or (\ref{eq:FiniteMAcond}) for finite order moving averages, we will assume that all random variables $C_{i}$ are a.s.~of %the same sign.

\begin{thm}\label{t:InfMA}
Let $(X_{i})$ be a moving average process defined by
$$ X_{i} = \sum_{j=0}^{\infty}C_{j}Z_{i-j}, \qquad i \in \mathbb{Z},$$
where $(Z_{i})_{i \in \mathbb{Z}}$ is a strictly stationary and strongly mixing sequence of regularly varying random variables satisfying $(\ref{e:regvar})$ and $(\ref{eq:pq})$ with $\alpha \in (0,2)$, such that the local dependence condition $D'$ and conditions $(\ref{e:oceknula})$ and $(\ref{e:sim})$ hold. Let $(C_{i})_{i \geq 0}$ be a sequence of random variables,independent of $(Z_{i})$, satisfying conditions $(\ref{e:momcond})$ and $(\ref{eq:InfiniteMAcond})$. If $\alpha \in (0,1)$ suppose further
%\begin{equation}\label{e:momconddr}
% \sum_{j=0}^{\infty} \mathrm{E} |C_{j}|^{\delta} < \infty \qquad \textrm{for some}  \ \delta \in (0, \alpha),
% \end{equation}
% and
\begin{equation}\label{e:mod1}
 \sum_{i=0}^{\infty}\mathrm{E}|C_{i}|^{\gamma} < \infty \qquad \textrm{for some} \ \gamma \in (\alpha, 1),
\end{equation}
while if $\alpha \in [1,2)$ suppose condition $(\ref{e:vsvcond})$ holds,
\begin{equation}\label{eq:infmaTK}
\limsup_{n \to \infty} \sup_{j \geq 0} \mathrm{E} \bigg[ \max_{1 \leq l \leq n} \bigg| \frac{1}{a_{n}} \sum_{i=1}^{l}Z_{i-j} 1_{\{ |Z_{i-j}| \leq a_{n} \}} \bigg|^{r}\bigg] < \infty \qquad \textrm{for some} \ r \geq 1,
\end{equation}
and
\begin{equation}\label{eq:infmaTK3}
\sum_{j=0}^{\infty} \mathrm{E}|C_{j}| < \infty.
\end{equation}
%Assume additionally condition $(\ref{e:momconddr})$ holds for $\alpha=1$.
Then, as $n \to \infty$,
\begin{equation}\label{e:mainconvInfMA}
L_{n}(\,\cdot\,) \dto  (C^{(0)}V(\,\cdot\,), C^{(1)}M^{(1)}(\,\cdot\,) \vee C^{(2)}M^{(2)}(\,\cdot\,))
\end{equation}
 in $D^{1} \times D^{1}_{\uparrow}$ with the weak $M_{2}$ topology,
where $V$ is an $\alpha$--stable L\'{e}vy process with characteristic triple $(0, \mu, b)$, with $\mu$ as in $(\ref{eq:mu})$,
$$ b = \left\{ \begin{array}{cc}
                                   0, & \quad \alpha = 1,\\[0.4em]
                                   (p-r)\frac{\alpha}{1-\alpha}, & \quad \alpha \in (0,1) \cup (1,2),
                                 \end{array}\right.$$
$M^{(1)}$ and $M^{(2)}$ are extremal processes with exponent measures
$p \alpha x^{-\alpha-1}1_{(0,\infty)}(x)\,dx$ and $r \alpha x^{-\alpha-1}1_{(0,\infty)}(x)\,dx$ respectively, with $p$ and $r$ defined in $(\ref{eq:pq})$, and $(C^{(0)}, C^{(1)}, C^{(2)})$ is a random vector, independent of $(V, M^{(1)}, M^{(2)})$, such that
$(C^{(0)}, C^{(1)}, C^{(2)}) \eind (C, C_{+}, C_{-})$, where $C=\sum_{i=0}^{\infty}C_{i}$ and $C_{+}, C_{-}$ as defined in $(\ref{e:Cplusminus})$.
\end{thm}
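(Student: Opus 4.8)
The plan is to derive the theorem from Proposition~\ref{p:FLT} by the same reduction used for finite order processes: show that the $M_{2}$--distance between $L_{n}$ and the auxiliary process $\widetilde{L}_{n}$ is asymptotically negligible. Condition (\ref{e:momcond}) makes the defining series a.s.~convergent, so Proposition~\ref{p:FLT} applies verbatim and yields $\widetilde{L}_{n} \dto (C^{(0)}V, C^{(1)}M^{(1)} \vee C^{(2)}M^{(2)})$ in $D^{1} \times D^{1}_{\uparrow}$ with the weak $M_{1}$, hence also the weak $M_{2}$, topology. By Slutsky's theorem (Theorem 3.4 in Resnick~\cite{Resnick07}) it then suffices to prove $d_{p}^{M_{2}}(L_{n}, \widetilde{L}_{n}) \to 0$ in probability, and by the definition (\ref{e:defdp}) of $d_{p}^{M_{2}}$ this separates into the sum statement $d_{M_{2}}(V_{n}, \widetilde{V}_{n}) \to 0$ and the maximum statement $d_{M_{2}}(M_{n}, \widetilde{M}_{n}) \to 0$ in probability. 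The sum coordinate I would take from the literature: the negligibility $d_{M_{2}}(V_{n}, \widetilde{V}_{n}) \to 0$ in probability for infinite order moving averages is exactly what is proved on the way to the marginal functional limit theorem in Krizmani\'{c}~\cite{Kr22-1}, and it is there that conditions (\ref{eq:InfiniteMAcond}), (\ref{e:vsvcond}) and the moment conditions (\ref{e:mod1}), (\ref{eq:infmaTK}), (\ref{eq:infmaTK3}) are consumed. This is the analogue of invoking (\ref{e:SlutskyV}) in the finite order proof.

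The maximum coordinate is the genuinely new step, and I would handle it by truncating the coefficients. Write $X_{i}^{(q)} = \sum_{j=0}^{q} C_{j} Z_{i-j}$, let $M_{n}^{(q)}$ be its partial maxima process, and let $\widetilde{M}_{n}^{(q)}$ be $\widetilde{M}_{n}$ with $C_{+}, C_{-}$ replaced by $C_{+}^{(q)} = \max_{0 \le j \le q}(C_{j} \vee 0)$ and $C_{-}^{(q)} = \max_{0 \le j \le q}(-C_{j} \vee 0)$. Since $d_{M_{2}}$ is a metric,
\[
 d_{M_{2}}(M_{n}, \widetilde{M}_{n}) \le d_{M_{2}}(M_{n}, M_{n}^{(q)}) + d_{M_{2}}(M_{n}^{(q)}, \widetilde{M}_{n}^{(q)}) + d_{M_{2}}(\widetilde{M}_{n}^{(q)}, \widetilde{M}_{n}).
\]
The two outer terms are bounded by the uniform distance, which for maxima reduces to a single height estimate: $\sup_{t}|M_{n}(t) - M_{n}^{(q)}(t)| \le a_{n}^{-1}\max_{1 \le i \le n}|R_{i}^{(q)}|$ with $R_{i}^{(q)} = \sum_{j>q} C_{j} Z_{i-j}$, and $\sup_{t}|\widetilde{M}_{n}(t) - \widetilde{M}_{n}^{(q)}(t)| \le \max\{C_{+} - C_{+}^{(q)}, C_{-} - C_{-}^{(q)}\}\,a_{n}^{-1}\max_{1 \le i \le n}|Z_{i}|$. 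For the middle term, with $q$ fixed, $d_{M_{2}}(M_{n}^{(q)}, \widetilde{M}_{n}^{(q)}) \to 0$ in probability is exactly the maximum part of the proof of Theorem~\ref{t:FinMA}, carried out through the event decomposition into $H_{n,1}, H_{n,2}, H_{n,3}$: that argument uses only condition $D'$ and regular variation, and crucially does not invoke the sign condition (\ref{eq:FiniteMAcond}), so it applies to the truncations $C_{0}, \ldots, C_{q}$ even though these need not themselves satisfy (\ref{eq:FiniteMAcond}). The sign condition (\ref{eq:InfiniteMAcond}) enters only through the sum coordinate, via \cite{Kr22-1}.

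It remains to make the two outer terms negligible in the iterated limit $\lim_{q \to \infty}\limsup_{n \to \infty}$, and this is where I expect the main difficulty. For the $\widetilde{M}_{n}$ term the estimate is soft: $C_{\pm} - C_{\pm}^{(q)} \to 0$ a.s.~as $q \to \infty$ (because (\ref{eq:infmaTK3}), respectively (\ref{e:mod1}), force $\sum_{j}|C_{j}| < \infty$ a.s.~and hence $C_{j} \to 0$), while $a_{n}^{-1}\max_{1 \le i \le n}|Z_{i}|$ is tight with a Fr\'{e}chet limit since $D'$ and strong mixing give extremal index $1$; conditioning on the independent coefficients and applying bounded convergence finishes it. The delicate term is $a_{n}^{-1}\max_{1 \le i \le n}|R_{i}^{(q)}|$: here one cannot bound the infinitely many discarded terms by a single maximum of $|Z|$, and a direct moment bound is blocked because $a_{n}^{-1}\max_{i}|Z_{i}|$ has finite moments only of order below $\alpha$. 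Instead I would condition on the coefficients, use the union bound $\Pr(\max_{1 \le i \le n}|R_{i}^{(q)}| > \epsilon a_{n}) \le n\,\Pr(|R_{0}^{(q)}| > \epsilon a_{n})$, and invoke that the residual linear combination $R_{0}^{(q)} = \sum_{j>q} C_{j} Z_{-j}$ is, conditionally on the coefficients, regularly varying of the same index $\alpha$ as $Z$ with tail ratio proportional to $\sum_{j>q}|C_{j}|^{\alpha}$ (finite a.s.~by (\ref{e:momcond}) together with $C_{j} \to 0$), which tends to $0$ a.s.~as $q \to \infty$. Passing to the limit by reverse Fatou (dominating by $1$) and then dominated convergence gives $\lim_{q \to \infty}\limsup_{n \to \infty}\Pr(a_{n}^{-1}\max_{i}|R_{i}^{(q)}| > \epsilon) = 0$. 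Establishing this conditional regular variation of $R_{0}^{(q)}$ for weakly dependent innovations, and controlling its tail constant uniformly enough to let $q \to \infty$, is the technical heart of the argument. Combining the three terms, taking $\limsup_{n}$ and then $\lim_{q}$ (the left side being independent of $q$), yields $d_{M_{2}}(M_{n}, \widetilde{M}_{n}) \to 0$ in probability, which together with the sum coordinate completes the proof.
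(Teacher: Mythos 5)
Your overall scheme is genuinely different from the paper's. The paper does not compare $L_{n}$ directly with $\widetilde{L}_{n}$: it approximates $X_{i}$ by the \emph{folded} finite order processes $X_{i}^{q}=\sum_{j=0}^{q-1}C_{j}Z_{i-j}+C^{q}Z_{i-q}$, $C^{q}=\sum_{j\geq q}C_{j}$ (folding the tail sum into the last coefficient is exactly what makes the truncated family inherit condition (\ref{eq:FiniteMAcond}) from (\ref{eq:InfiniteMAcond})), applies Theorem~\ref{t:FinMA} to each $X^{q}$, proves $L^{q}\dto L$ as $q\to\infty$ via Kallenberg's theorems and the continuous mapping theorem, and closes with the double-limit version of Slutsky's theorem (Theorem 3.5 in Resnick~\cite{Resnick07}) after verifying $\lim_{q}\limsup_{n}\Pr[d_{p}^{M_{2}}(L_{n},L_{n,q})>\epsilon]=0$ by explicit Markov/H\"{o}lder/Karamata estimates. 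Your variant is workable in principle, and two of your structural observations are correct and match the paper: the $H_{n,1},H_{n,2},H_{n,3}$ decomposition for the maxima coordinate does not use the sign condition, and the sign condition enters only through the sum coordinate. (One caveat on your sum-coordinate citation: what is proved in \cite{Kr22-1} is the iterated-limit negligibility relative to the folded processes $V_{n,q}$, not literally $d_{M_{2}}(V_{n},\widetilde{V}_{n})\stp 0$; the direct statement does follow, but only because the folded truncation has the same total sum $C$, so its auxiliary process coincides with $\widetilde{V}_{n}$ --- a deduction you would need to spell out, and which again uses the folding.)

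The genuine gap is your treatment of the residual term $a_{n}^{-1}\max_{1\leq i\leq n}|R_{i}^{(q)}|$, $R_{i}^{(q)}=\sum_{j>q}C_{j}Z_{i-j}$. After the union bound you invoke the claim that, conditionally on the coefficients, $R_{0}^{(q)}$ is regularly varying with index $\alpha$ and tail constant proportional to $\sum_{j>q}|C_{j}|^{\alpha}$, i.e.\ $n\Pr(|R_{0}^{(q)}|>\epsilon a_{n})\to c\,\epsilon^{-\alpha}\sum_{j>q}|C_{j}|^{\alpha}$. No such result is available for innovations that are merely strongly mixing and satisfy $D'$: tail-equivalence theorems for infinite weighted series (Mikosch--Samorodnitsky type, or Hult--Samorodnitsky~\cite{HuSa08} for random coefficients) are proved for i.i.d.\ innovations and under moment conditions on $(C_{j})$ stronger than (\ref{e:momcond}); dependence among the $Z_{i}$'s can change the tail behaviour of the infinite sum, and nothing in the hypotheses rules this out. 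You would also need this convergence uniformly enough in $q$ (and in the random coefficients) to pass to the iterated limit, which your ``reverse Fatou'' step does not supply. The point you are missing is that no tail asymptotics are needed at all: after the union bound (which uses only stationarity), split each $Z_{-j}$ at level $a_{n}$ and apply Markov's inequality with exponent $\gamma\in(\alpha,1)$ from (\ref{e:mod1}) (resp.\ exponent $\delta<\alpha$ from (\ref{e:momcond}) for the part above $a_{n}$, and the conditions (\ref{eq:infmaTK}), (\ref{eq:infmaTK3}) when $\alpha\in[1,2)$) together with Karamata's theorem; each bound is of the form $\mathrm{const}\cdot\sum_{j>q}\mathrm{E}|C_{j}|^{s}$, which vanishes as $q\to\infty$. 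This is exactly the route of Theorem 3.4 in \cite{Kr22-2} that the paper imports, it requires only stationarity of $(Z_{i})$ and independence of the coefficients from the innovations, and it is what makes the theorem's hypotheses --- rather than unavailable tail-equivalence results --- suffice.
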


\begin{proof}
For $q \in \mathbb{N}$ define
$$ X_{i}^{q} = \sum_{j=0}^{q-1}C_{j}Z_{i-j} + C^{q} Z_{i-q}, \qquad i \in \mathbb{Z},$$
where $C^{q}= \sum_{j=q}^{\infty}C_{j}$, and let
$$ V_{n, q}(t) = \sum_{i=1}^{\floor{nt}} \frac{X_{i}^{q}}{a_{n}} \quad \textrm{and} \quad M_{n, q}(t) = \frac{X_{1}^{q}}{a_{n}}1_{[0, 1/n \rangle}(t) + \bigvee_{i=1}^{\floor {nt}}\frac{X_{i}^{q}}{a_{n}} 1_{[1/n,1]}(t), \qquad t \in [0,1].$$
Condition (\ref{eq:InfiniteMAcond}) implies that coefficients $C_{0}, \ldots, C_{q-1}, C^{q}$ satisfy condition (\ref{eq:FiniteMAcond}), and hence an application of Theorem~\ref{t:FinMA} to the finite order moving average process $(X_{i}^{q})_{i}$ yields that
\begin{equation}\label{e:SlutskyINFMA1}
 L_{n, q}(\,\cdot\,) := (V_{n, q}(\,\cdot\,), M_{n, q}(\,\cdot\,))  \dto L^{q}(\,\cdot\,) \qquad \textrm{as} \ n \to \infty,
 \end{equation}
in $D^{1} \times D^{1}_{\uparrow}$ with the weak $M_{2}$ topology, with
$L^{q} =(C^{(0)}V, C^{(1)}_{q}M^{(1)} \vee C^{(2)}_{q}M^{(2)})$, where $V$ is an $\alpha$--stable L\'{e}vy process and $M^{(1)}, M^{(2)}$ are extremal processes as in Theorem~\ref{t:FinMA}, and $(C^{(0)}, C^{(1)}_{q}, C^{(2)}_{q})$ is a random vector, independent of $(V, M^{(1)}, M^{(2)})$, such that
$(C^{(0)}, C^{(1)}_{q}, C^{(2)}_{q}) \eind (C, C_{+,q}, C_{-,q})$, with
$$C_{+,q} = \max \{ C_{j} \vee 0 : j =0,\ldots, q-1\} \vee (C^{q} \vee 0)$$
and
 $$C_{-,q} = \max \{ -C_{j} \vee 0 : j =0,\ldots, q-1\} \vee (-C^{q} \vee 0).$$
Since $\sum_{j=0}^{\infty}|C_{j}| < \infty$ a.s.~(which follows from condition (\ref{e:momcond})), it is straightforward to obtain
$$ C_{+,q} \to C_{+} \quad \textrm{and}  \quad  C_{-,q} \to C_{-}$$
almost surely as $q \to \infty$. Therefore
$$ \| (B, B_{+,q}, B_{-,q}) - (B, B_{+}, B_{-})\|_{[0,1]} \to 0$$
almost surely as $q \to \infty$, where $B(t)= C$, $B_{+,q}(t)=C_{+,q}$, $B_{-,q}(t)=C_{-,q}$, $B_{+}(t)=C_{+}$ and $B_{-}(t)=C_{-}$ for $t \in [0,1]$. Since uniform convergence implies Skorokhod $M_{1}$ convergence, it follows that
$d_{p}^{M_{1}}( (B, B_{+,q}, B_{-,q}), (B, B_{+}, B_{-})  ) \to 0$
almost surely as $q \to \infty$, and hence, since almost sure convergence implies convergence in distribution, we have
\begin{equation*}\label{e:contmtKall}
(B, B_{+,q}, B_{-,q}) \dto (B, B_{+}, B_{-}) \qquad \textrm{as} \ q \to \infty,
\end{equation*}
in $D^{1} \times D^{2}_{\uparrow}$ with the weak $M_{1}$ topology. An application of Theorem 3.29 and Corollary 5.18 in Kallenberg~\cite{Ka97} yields
\begin{equation}\label{e:contmtKall2}
 (B^{(0)}, B^{(1)}_{q}, B^{(2)}_{q}, V, M^{(1)}, M^{(2)}) \dto (B^{(0)}, B^{(1)}, B^{(2)}, V, M^{(1)}, M^{(2)})
\end{equation}
in $D^{1} \times D^{2}_{\uparrow} \times D^{1} \times D^{2}_{\uparrow}$ with the product $M_{1}$ topology, where $B^{(0)}(t)=C^{(0)}$, $B^{(1)}_{q}(t)=C^{(1)}_{q}$, $B^{(2)}_{q}(t)=C^{(2)}_{q}$, $B^{(1)}(t)=C^{(1)}$ and $B^{(2)}(t)=C^{(2)}$ for $t \in [0,1]$, and $(C^{(0)}, C^{(1)}, C^{(2)})$ is a random vector, independent of $(V, M^{(1)}, M^{(2)})$, such that $(C^{(0)}, C^{(1)}, C^{(2)}) \eqd (C, C_{+}, C_{-})$.

Now, as in the proof of Proposition~\ref{p:FLT}, an application of the continuous mapping theorem to the convergence relation in (\ref{e:contmtKall2}), with the function $g$ given by
$$ g(x) = (x_{1}  x_{4}, x_{2}  x_{5}, x_{3}  x_{6}), \qquad x=(x_{1}, \ldots, x_{6}) \in D^{1} \times D^{2}_{\uparrow} \times D^{1} \times D^{2}_{\uparrow},$$
yields
$$g (B^{(0)}, B^{(1)}_{q}, B^{(2)}_{q}, V, M^{(1)}, M^{(2)}) \dto g(B^{(0)}, B^{(1)}, B^{(2)}, V, M^{(1)}, M^{(2)}),$$
 as $q \to \infty$, that is
 \begin{equation}\label{e:SlutskyINFMA2}
L^{q} = (C^{(0)}, C^{(1)}_{q} M^{(1)} \vee C^{(2)}_{q} M^{(2)}) \dto L := (C^{(0)}V, C^{(1)}M^{(1)} \vee C^{(2)}M^{(2)})
 \end{equation}
in $D^{1} \times D^{1}_{\uparrow}$ with the weak $M_{1}$ topology, and then also with the weak $M_{2}$ topology.

If we show that for every $\epsilon >0$
\begin{equation}\label{e:Slutskyinf01}
 \lim_{q \to \infty} \limsup_{n \to \infty}\Pr[d_{p}^{M_{2}}(L_{n}, L_{n,q})> \epsilon]=0,
\end{equation}
then from relations (\ref{e:SlutskyINFMA1}) and (\ref{e:SlutskyINFMA2}) by a generalization of Slutsky's theorem (Resnick~\cite{Resnick07}, Theorem 3.5) it will follow that $L_{n}(\,\cdot\,) \dto L(\,\cdot\,)$ in $D^{1} \times D^{1}_{\uparrow}$ with the weak $M_{2}$ topology.  By the definition of the metric $d_{p}^{M_{2}}$ it suffices to show that
\begin{equation}\label{e:SlutskyINFV}
\lim_{q \to \infty} \limsup_{n \to \infty} \Pr [ d_{M_{2}}(V_{n}, V_{n,q}) > \epsilon ]=0
\end{equation}
and
\begin{equation}\label{e:SlutskyINFM}
\lim_{q \to \infty} \limsup_{n \to \infty} \Pr [ d_{M_{2}}(M_{n}, M_{n,q}) > \epsilon]=0.
\end{equation}
Since the metric $d_{M_{2}}$ is bounded above by the uniform metric,
$$\Pr [ d_{M_{2}}(M_{n}, M_{n,q}) > \epsilon]  \leq  \Pr \bigg( \sup_{t \in [0,1]}|M_{n}(t) - M_{n, q}(t)|> \epsilon \bigg),$$
and hence by using the same arguments as in the proof of Theorem 3.4 in Krizmani\'{c}~\cite{Kr22-2}, conditions (\ref{e:momcond}), (\ref{e:mod1}) and (\ref{eq:infmaTK3}) imply
$$ \lim_{q \to \infty} \limsup_{n \to \infty} \Pr \bigg( \sup_{t \in [0,1]}|M_{n}(t) - M_{n, q}(t)|> \epsilon \bigg)=0,$$
and hence relation (\ref{e:SlutskyINFM}) holds.
Note that
 $$ \Pr [ d_{M_{2}}(V_{n}, V_{n,q}) > \epsilon]  \leq  \Pr \bigg( \sup_{t \in [0,1]}|V_{n}(t) - V_{n, q}(t)|> \epsilon \bigg)
   \leq  \Pr \bigg( \sum_{i=1}^{n}\frac{|X_{i}-X_{i}^{q}|}{a_{n}} > \epsilon \bigg).$$
In the case $\alpha \in (0,1)$ by repeating the arguments from the proof of Theorem 3.4 in Krizmani\'{c}~\cite{Kr22-2}, from conditions (\ref{e:momcond}) and (\ref{e:mod1}) we obtain
\begin{equation*}\label{e:infSlutsky}
 \lim_{q \to \infty} \limsup_{n \to \infty} \Pr \bigg( \sum_{i=1}^{n}\frac{|X_{i}-X_{i}^{q}|}{a_{n}} > \epsilon \bigg)=0,
\end{equation*}
and relation (\ref{e:SlutskyINFV}) holds.

In the case $\alpha \in [1,2)$ define $Z_{n,j}^{\leq} = a_{n}^{-1} Z_{j} 1_{\{ |Z_{j}| \leq a_{n} \}}$ and $Z_{n,j}^{>} = a_{n}^{-1} Z_{j} 1_{\{ |Z_{j}| > a_{n} \}}$  for $j \in \mathbb{Z}$ and $n \in \mathbb{N}$. Let
$$ \widetilde{C}_{j} = \left\{ \begin{array}{cl}
                                   C_{j}, & \quad \textrm{if} \ j \geq q+1,\\[0.4em]
                                   C_{q}-C^{q}, & \quad \textrm{if} \ j=q.
                                 \end{array}\right.$$
and observe that
\begin{eqnarray*}
\nonumber  V_{n}(t) - V_{n,q}(t) & = & \sum_{i=1}^{\floor{nt}} \frac{1}{a_{n}} \bigg( \sum_{j=q}^{\infty}C_{j}Z_{i-j} -C^{q}Z_{i-q} \bigg) \\[0.4em]
& = &  \sum_{i=1}^{\floor{nt}}  \sum_{j=q}^{\infty} \widetilde{C}_{j}Z_{n,i-j}^{\leq} + \sum_{i=1}^{\floor{nt}}  \sum_{j=q}^{\infty} \widetilde{C}_{j}Z_{n,i-j}^{>}.
\end{eqnarray*}
Therefore
\begin{eqnarray}\label{eq:I1I2}
\nonumber   \Pr[d_{M_{2}}(V_{n}, V_{n,q})> \epsilon] & \leq & \Pr \bigg( \sup_{ t \in [0,1]} |V_{n}(t) - V_{n,q}(t)| > \epsilon \bigg)\\[0.4em]
\nonumber & \hspace*{-18em} \leq & \hspace*{-9em} \Pr \bigg( \max_{1 \leq l \leq n} \bigg| \sum_{i=1}^{l} \sum_{j=q}^{\infty} \widetilde{C}_{j}Z_{n,i-j}^{\leq} \bigg| > \frac{\epsilon}{2} \bigg) + \Pr \bigg( \max_{1 \leq l \leq n} \bigg| \sum_{i=1}^{l} \sum_{j=q}^{\infty} \widetilde{C}_{j}Z_{n,i-j}^{>} \bigg| > \frac{\epsilon}{2} \bigg)\\[0.4em]
& \hspace*{-18em} =: & \hspace*{-9em} I_{1} + I_{2}.
\end{eqnarray}
Now following the arguments from the proof of Theorem 3.1 in Krizmani\'{c}~\cite{Kr22-1} we obtain, by
H\"{o}lder's and Markov's inequalities and the fact that the sequence $(C_{i})_{i \geq 0}$ is independent of $(Z_{i})$,
\begin{equation*}
I_{1} \leq \mathrm{E} \bigg( \sum_{j=q}^{\infty}|\widetilde{C}_{j}| \bigg) + \frac{2^{r}}{\epsilon^{r}} \sum_{j=q}^{\infty} \mathrm{E} |\widetilde{C}_{j}| \cdot \sup_{k \geq q} \mathrm{E} \bigg( \max_{1 \leq l \leq n} \bigg| \sum_{i=1}^{l}Z_{n,i-k}^{\leq} \bigg|^{r} \bigg),
\end{equation*}
with $r$ as in (\ref{eq:infmaTK}). Since $ \sum_{j=q}^{\infty}|\widetilde{C}_{j}| \leq 2 \sum_{j=q}^{\infty}|C_{j}|$, condition (\ref{eq:infmaTK}) implies that there exists a positive constant $D_{1}$ such that for all $q \in \mathbb{N}$ it holds that
\begin{equation}\label{eq:I1}
 \limsup_{n \to \infty} I_{1} \leq D_{1} \sum_{j=q}^{\infty}\mathrm{E} |C_{j}|.
\end{equation}
In order to estimate $I_{2}$ we consider separately the cases $\alpha \in (1,2)$ and $\alpha=1$. Assume first $\alpha \in (1,2)$. Applying Markov's inequality, the fact that the sequence $(C_{i})_{i \geq 0}$ is independent of $(Z_{i})$ and the stationarity of the sequence $(Z_{i})$ we obtain
\begin{eqnarray}\label{eq:alpha1}
\nonumber I_{2} & \leq &   \Pr \bigg(  \sum_{i=1}^{n} \bigg| \sum_{j=q}^{\infty} \widetilde{C}_{j}Z_{n,i-j}^{>} \bigg| > \frac{\epsilon}{2} \bigg) \leq  \frac{2}{\epsilon}\,\mathrm{E} \bigg(  \sum_{i=1}^{n} \bigg| \sum_{j=q}^{\infty} \widetilde{C}_{j}Z_{n,i-j}^{>} \bigg|  \bigg)\\[0.4em]
 & \leq & \frac{2n}{\epsilon a_{n}}  \sum_{j=q}^{\infty} \mathrm{E} |\widetilde{C}_{j}| \cdot \mathrm{E} \Big( |Z_{1}| 1 _{\{ |Z_{1}|>a_{n} \}} \Big)
\end{eqnarray}
Since by Karamata's theorem
$$ \lim_{n \to \infty} \frac{n}{ a_{n}} \mathrm{E} \Big( |Z_{1}| 1 _{\{ |Z_{1}|>a_{n} \}} \Big) = \frac{\alpha}{\alpha-1},$$
 from (\ref{eq:alpha1}) we conclude that there exists a positive constant $D_{2}$ such that
\begin{equation}\label{eq:I2a}
 \limsup_{n \to \infty} I_{2} \leq D_{2} \sum_{j=q}^{\infty} \mathrm{E} |C_{j}|.
\end{equation}
Now assume $\alpha =1$. Markov's inequality implies
$$ I_{2} \leq  \frac{2^{\delta}}{\epsilon^{\delta}}  \mathrm{E} \bigg(  \sum_{i=1}^{n} \bigg| \sum_{j=q}^{\infty} \widetilde{C}_{j}Z_{n,i-j}^{>} \bigg| \bigg)^{\delta},$$
with $\delta$ as in relation (\ref{e:momcond}). Since $\delta < 1$, a double application of the triangle inequality $|\sum_{i=1}^{\infty}a_{i}|^{s} \leq \sum_{i=1}^{\infty}|a_{i}|^{s}$ with $s \in (0,1]$ yields
\begin{eqnarray*}
I_{2} & \leq & \frac{2^{\delta}}{\epsilon^{\delta}} \sum_{i=1}^{n} \mathrm{E} \bigg( \bigg| \sum_{j=q}^{\infty} \widetilde{C}_{j}Z_{n,i-j}^{>} \bigg|^{\delta} \bigg)
       \leq  \frac{2^{\delta}}{\epsilon^{\delta} a_{n}^{\delta}} \sum_{i=1}^{n} \sum_{j=q}^{\infty} \mathrm{E}  \bigg( \bigg| \widetilde{C}_{j}Z_{i-j} 1_{\{ |Z_{i-j}|>a_{n} \}} \bigg|^{\delta} \bigg).
\end{eqnarray*}
Using again the fact that $(C_{i})$ is independent of $(Z_{i})$ and the stationarity of $(Z_{i})$ we obtain
$$ I_{2} \leq \frac{2^{\delta} n}{\epsilon^{\delta} a_{n}^{\delta}} \mathrm{E} \Big( |Z_{1}|^{\delta} 1 _{\{ |Z_{1}|>a_{n} \}} \Big) \sum_{j=q}^{\infty} \mathrm{E} |\widetilde{C}_{j}|^{\delta}.$$
From this, since by Karamata's theorem
$$ \lim_{n \to \infty} \frac{n}{a_{n}^{\delta}} \mathrm{E} \Big( |Z_{1}|^{\delta} 1 _{\{ |Z_{1}|>a_{n} \}} \Big) = \frac{1}{1-\delta},$$
it follows that there exists a positive constant $D_{3}$ such that
\begin{equation*}\label{eq:I2b}
\limsup_{n \to \infty} I_{2} \leq D_{3} \sum_{j=q}^{\infty} \mathrm{E} |C_{j}|^{\delta}.
\end{equation*}
This together with (\ref{eq:I1I2}), (\ref{eq:I1}) and (\ref{eq:I2a}) shows that
$$ \limsup_{n \to \infty}\Pr[d_{M_{2}}(V_{n}, V_{n,q})> \epsilon] \leq D_{1} \sum_{j=q}^{\infty} \mathrm{E}|C_{j}| + (D_{2}+D_{3}) \sum_{j=q}^{\infty} \mathrm{E}|C_{j}|^{s},$$
 where
 $$ s = \left\{ \begin{array}{cc}
                                   \delta, & \quad \textrm{if} \ \alpha = 1,\\[0.4em]
                                   1, & \quad \textrm{if} \ \alpha \in (1,2).
                                 \end{array}\right.$$
Finally, the dominated convergence theorem and conditions (\ref{e:momcond}) and (\ref{eq:infmaTK3}) imply relation (\ref{e:SlutskyINFV}) for $\alpha \in [1,2)$. Therefore we conclude that $L_{n}(\,\cdot\,) \dto L(\,\cdot\,)$ in $D^{1} \times D^{1}_{\uparrow}$ with the weak $M_{2}$ topology.
\end{proof}

\begin{rem}
Condition (\ref{eq:infmaTK}) holds with $r=2$ when the sequence $(Z_{i})$ is an i.i.d.~or $\rho$--mixing sequence with $\sum_{i=1}^{\infty}\rho(2^{i}) < \infty$, where
$$ \rho(n) = \sup \{ | \textrm{corr}(f,g) | : f \in \mathrm{L}^{2}(\mathcal{F}_{1}^{k}), g \in \mathrm{L}^{2}(\mathcal{F}_{k+n}^{\infty}), k=1,2,\ldots \}$$
(see Tyran-Kami\'{n}ska~\cite{Ty10b}).

In the case when the sequence of coefficients $(C_{j})$ is deterministic, conditions (\ref{e:mod1}) and (\ref{eq:infmaTK3}) can be dropped since they are implied by (\ref{e:momcond}), but this in general does not hold when the coefficients are random (see Remark 3.1 in Krizmani\'{c}~\cite{Kr22-1}).
\end{rem}

\section*{Acknowledgment}
 This work has been supported in part by University of Rijeka research grants uniri-prirod-18-9 and uniri-pr-prirod-19-16 and by Croatian Science Foundation under the project IP-2019-04-1239.

%\bibliographystyle{amsplain}
%%
% requires a BiBTeX file sample.bib
%\bibliography{sample}

\end{document}